\newtheorem{thm}{Theorem}[section]
\newtheorem{theorem}[thm]{Theorem}
\newtheorem{lemma}[thm]{Lemma}
\newtheorem{proposition}[thm]{Proposition}
\theoremstyle{definition}
\newtheorem{definition}[thm]{Definition}
\theoremstyle{remark}
\newtheorem{remark}[thm]{Remark}
\newenvironment{theorem*}[1]{\smallskip\noindent{\bf #1.}\it}{\medskip}
\numberwithin{equation}{section} \setcounter{section}{0}
\newcommand\tr{\operatorname{tr}}
\newcommand\dom{\operatorname{dom}}
\newcommand\Iso{\operatorname{Iso}}
\newcommand\bC{{\mathbb C}}
\newcommand\bN{{\mathbb N}}
\newcommand\bR{{\mathbb R}}
\newcommand\bZ{{\mathbb Z}}
\newcommand\cD{{\mathcal D}}
\newcommand\cR{{\mathcal R}}
\newcommand\cP{{\mathcal P}}
\newcommand\cQ{{\mathcal Q}}
\newcommand\sD{{\mathscr D}}
\newcommand\sK{{\mathscr K}}
\newcommand\sR{{\mathscr R}}
\newcommand\sX{{\mathscr X}}
\newcommand\la{\lambda}
\newcommand\bg{\mathbf{g}}
\newcommand\bu{\mathbf{u}}
\newcommand\bv{\mathbf{v}}
\newcommand\bla{\bm{\lambda}}
\newcommand\bmu{\bm{\mu}}
\begin{document}

\title{Reconstruction of energy-dependent Sturm--Liouville equations from two spectra}

\author[N.~Pronska]{Nataliya Pronska}%

\address[N.P.]{Institute for Applied Problems of Mechanics and Mathematics,
3b~Naukova st., 79601 Lviv, Ukraine}
\email{nataliya.pronska@gmail.com}

\subjclass[2010]{Primary 34A55, Secondary 34B07, 34B24, 34B30, 34L40, 47E05}%

\keywords{Inverse spectral problem, energy-dependent potentials, Sturm--Liouville operators}%

\date{\today}

\begin{abstract}
In this paper we  study the inverse spectral problem of
reconstructing energy-dependent Sturm--Liouville equations from
two spectra. We give a reconstruction algorithm  and establish
existence and uniqueness of reconstruction. Our approach
essentially exploits the connection between the spectral problems
under study and those for Dirac operators of a special form.
\end{abstract}

\maketitle


\section{Introduction}
In this paper, we consider energy-dependent Sturm--Liouville
 differential equations
\begin{equation}\label{eq:intr.spr}
    -y''+qy+2\lambda p y=\lambda^2y
\end{equation}
on~$(0,1)$; here~$\la\in\bC$ is the spectral parameter,~$p$ is a
real-valued function from~$L_2(0,1)$ and~$q$ is a real-valued
distribution from the Sobolev space~$W_2^{-1}(0,1)$, i.e.~$q =r'$
with a real-valued~$r\in L_{2}(0,1)$ (see a detailed definition in
the next section). Our aim is to study the inverse problem of
reconstructing the potential~$p$ and a primitive~$r$ of~$q$ from
the spectra of the problem~\eqref{eq:intr.spr} under two types of
boundary conditions: the Dirichlet ones
\begin{equation}\label{eq:intr.bcD}
    y(0)=y(1)=0
\end{equation}
and the so-called mixed conditions
\begin{equation}\label{eq:intr.bcM}
    y(0)=(y'-ry)(1)=0.
\end{equation}

The spectral problems under consideration often arise in classical
and quantum mechanics. The most common example is modelling of the
motion of massless particles such as photons by means of the
Klein--Gordon equations, which can be transformed to
Sturm--Liouville equations with potentials depending on the
spectral parameter. The corresponding evolution equations are used
to describe interactions of colliding spinless particles. The
equations of interest also arise in modelling of mechanical
systems vibrations in viscous media, see~\cite{Yam:90}. As the
spectral parameter is related to the energy of the system, this
explains the terminology ``energy-dependent potentials'' widely
accepted in the physical and mathematical literature.

Since equation~\eqref{eq:intr.spr} depends nonlinearly on~$\la$,
we should regard the spectral problems~\eqref{eq:intr.spr},
\eqref{eq:intr.bcD} and~\eqref{eq:intr.spr}, \eqref{eq:intr.bcM}
as those for quadratic operator pencils. Some spectral properties
of such problems can be derived from the general spectral theory
of polynomial operator pencils~\cite{Mar:88}.

An interesting approach to the spectral analysis of Klein--Gordon
equations, using the theory of Krein spaces (i.e.\ spaces with
indefinite scalar products) was suggested by
P.~Jonas~\cite{Jon:93} and H.~Langer, B.~Najman, and
C.~Tretter~\cite{LanNajTre:06,Naj:83,LanNajTre:08}. This method
was also applied to the spectral analysis of problems of interest,
cf.~\cite{Pro:2011a}.

Equation~\eqref{eq:intr.spr} has appeared earlier in the context
of inverse scattering.
 For example,  M.~Jaulent and C.~Jean
studied inverse scattering problems for energy-dependent
Schr\"odinger operators on the line and half-line in~\cite{Jau72, JauJea72,
JauJea761,JauJea762}. Such problems were also considered in
\cite{MeePiv01,SatSzm95, AktMee91, Kam081, MakGus89,MakGus86,
Nab06,NabGus06, Tsu81}.

However, the inverse problems for energy-dependent
Sturm--Liouville equations on a finite interval have
 not been studied sufficiently well. M.~Gasymov and G.~Guseinov
 discussed such problems with~$p\in W_2^1(0,1)$ and~$q\in L_2(0,1)$ and with Robin boundary
conditions in their short paper~\cite{GasGus81} of 1981 containing
no proofs. In the papers \cite{Gus86,GusNab07,Nab04, Nab07,
YanGuo11}  the spectral problems of interest with (quasi)-periodic
boundary conditions were considered, but  only the Borg-type
uniqueness results were obtained therein.

In the present paper, we consider the inverse spectral problem
for~\eqref{eq:intr.spr} under minimal smoothness assumptions on
the real-valued potentials~$p$ and~$q$, including, e.g.\ the case
when~$q$ contains Dirac delta-functions and/or Coulumb-like
singularities. Such functions often arise in modelling of
interactions in molecules and atoms in quantum mechanics, see the
monographs by S.~Albeverio et al.~\cite{AGHH} and by S.~Albeverio
and P.~Kurasov~\cite{AlbKur:99} and the extensive reference lists
therein. The inverse spectral problem for
equation~\eqref{eq:intr.spr} under such minimal smoothness
assumptions on the potentials~$p$ and~$q$ was considered
in~\cite{HryPro:2012}, where the reconstruction from the Dirichlet
spectrum and the set of suitably defined norming constants was
investigated.

Here, we study the problem of reconstruction of the potentials $p$
and $q$ of~\eqref{eq:intr.spr} from the two spectra determined
respectively by the boundary conditions~\eqref{eq:intr.bcD}
and~\eqref{eq:intr.bcM}. Namely, we give a complete description of
these spectra; establish constructively existence of the pencil
with given spectra and thus derive a reconstructing algorithm;
finally, we prove uniqueness of the reconstruction. Note
that~\eqref{eq:intr.bcM} contains an unknown primitive $r$ of $q$;
therefore it is in fact this primitive $r$ (called the regularized
potential, or simply the potential) rather than just $q$ that
should be found. Our approach uses the technique analogous to that
of~\cite{HryPro:2012}. Namely, we shall associate the spectral
problems~\eqref{eq:intr.spr},~\eqref{eq:intr.bcD}
and~\eqref{eq:intr.spr},~\eqref{eq:intr.bcM} with those for Dirac
operators of a special form and shall essentially use the
well-developed inverse spectral theory for Dirac operators in our
analysis.

The paper is organized as follows. In the next section we give the
explicit formulation of the inverse problem of interest and state
the main results. In Section~\ref{sec:Dir}, we show that the
spectral problems under consideration can be reduced to those for
Dirac operators with potentials of a special form. In
Section~\ref{sec:transf}, we introduce the so-called
transformation operators relating such Dirac operators to the
Dirac operators in a canonical form. Existence and uniqueness of
Dirac operators with potential of the special form having given
sets as their spectra is established in Section~\ref{sec:Dir-rec};
in this section we also prove the main results. In the last
section we formulate the reconstruction algorithm and discuss some
possible extensions.

\emph{Notations.} Throughout the paper, we denote by
$L_{2,\mathbb{R}}(0,1)$ and $W_{2,\mathbb{R}}^{-1}(0,1)$ the
spaces of real-valued functions in~$L_2(0,1)$ and distributions in
$W_{2}^{-1}(0,1)$, respectively, and by
$\mathcal{M}_2=\mathcal{M}_2(\bC)$ the linear space of $2\times 2$
matrices with complex entries endowed with the Euclidean operator
norm. Next, $\rho(T)$ and $\sigma(T)$ will stand for the resolvent
set and the spectrum of a linear operator or a quadratic operator
pencil~$T$. The superscript~$\mathrm{t}$ will signify
 the transposition of vectors and matrices, e.g.\
$(c_1,c_2)^{\mathrm{t}}$ is the column vector~$\binom{c_1}{c_2}$.


\section{Preliminaries and main results} \label{sec:pre}


As was mentioned before, the spectral
problems~\eqref{eq:intr.spr}, \eqref{eq:intr.bcD}
and~\eqref{eq:intr.spr}, \eqref{eq:intr.bcM} are the spectral
problems for quadratic operator pencils. To give a precise
definition, we consider first the differential expression
\[
    \ell(y) := - y'' + q y.
\]
Recall that~$q=r'$ is a real-valued distribution in the Sobolev
space~$W_2^{-1}(0,1)$, and to define~$\ell$ rigorously one can
use, e.g.\ the regularization by quasi-derivative method due to
Savchuk and Shkalikov~\cite{SavShk:1999,SavShk:2003}. Following
that method, one introduces the \emph{quasi-derivative}
$y^{[1]}:=y'-ry$ of an absolutely continuous~$y$ and then
define~$\ell$ as
 \[
   \ell (y) = -\bigl(y^{[1]}\bigr)' - r y^{[1]} - r^2 y
\]
on the set
\[
    \dom \ell = \{y \in AC (0,1) \mid y^{[1]} \in AC[0,1], \ \ell(y) \in L_2(0,1)\}.
\]
It is easy to verify that~$\ell (y) = - y'' + qy$ in the sense of
distributions and that for regular potentials~$q$ the above
definition coincides with the standard one.


Denote by~$A_1$ and~$A_2$ Sturm--Liouville operators defined via
\[
    A_j y = \ell(y),\quad j=1,2,
\]
on the domains
\[
    \dom A_1:=\{y \in \dom \ell \mid
        y(0)=y^{[1]}(1) =0\}
\]
and
\[
    \dom A_2:=\{y \in \dom \ell \mid
        y(0)=y(1) =0\}.
\]
For $q\in W_{2,\bR}^{-1}(0,1)$ the operators $A_1$ and~$A_2$ are
known~\cite{SavShk:1999,SavShk:2003} to be self-adjoint, bounded
below, and to have simple discrete spectra.

 Further, we denote by $B$ the operator of
multiplication by the potential~$p\in L_2(0,1)$. The operator~$B$
is in general unbounded. However, since the Green functions of the
operators~$A_j$ are continuous on the square $[0,1]\times[0,1]$,
the resolvents of~$A_j$ are of Hilbert--Schmidt class. It follows
that for every non-real~$\la$ the operators~$B(A-\la)^{-1}$,
$j=1,2$, are also integral operators of Hilbert--Schmidt class; in
particular, $\dom B \supset \dom A_j$, ~$B$
is~$A_j$-compact~\cite[Ch.~IV]{Kat:1966} and thus~$A_j$-bounded
with relative~$A_j$-bound 0~\cite[Lemma~III.2.16]{EngNag:2000}. As
usual, $I$ shall stand for the identity operator in~$L_2(0,1)$.

Now the spectral problems~\eqref{eq:intr.spr}, \eqref{eq:intr.bcD}
and~\eqref{eq:intr.spr}, \eqref{eq:intr.bcM} can be regarded as
the spectral problems for the \emph{quadratic operator pencils}
$T_2(p,r)$ and~$T_1(p,r)$ respectively, given by
\[
   T_j(p,r)(\la):=\la^2I - 2 \la B - A_j, \quad j=1,2,
\]
for $\lambda\in \bC$ on the $\la$-independent domain~$\dom
T_j(p,r):= \dom A_j$. From the above-listed properties of the
operators $A_j$ and $B$ we conclude that for every
$\lambda\in\bC$, the operator~$T_j(p,r)(\lambda)$ is well defined
and closed on $\dom T_j(p,r)$.

 We now recall the following notions
of the spectral theory of operator pencils, see~\cite{Mar:88}.

\begin{definition} An \emph{operator pencil}~$T$ is an
operator-valued function on~$\bC$.
 The \emph{spectrum}~$\sigma(T)$
of an operator pencil~$T$ is the set of all $\lambda\in\bC$ for
which~$T(\lambda)$ is not boundedly invertible, i.e.\
\[
    \sigma(T)=\{\lambda\in\mathbb{C}\mid 0\in\sigma(T(\lambda))\}.
\]
A number $\lambda\in\bC$ is called the \emph{eigenvalue} of $T$ if
$T(\lambda)y=0$ for some non-zero function~$y\in\dom T$, which is
then the corresponding \emph{eigenfunction}. Finally,
\[
    \rho(T):=\mathbb{C}\setminus\sigma(T)
\]
is the \emph{resolvent set}~ of an operator pencil~$T$.
\end{definition}

It was shown in~\cite{Pro:2011a} that the spectra of the operator
pencils~$T_j(p,r)$, $j=1,2$, consist entirely of eigenvalues and
that~$\sigma(T_j(p,r))$ are discrete subsets of~$\bC$. There are
examples of~$T_j(p,r)$ having non-real and/or non-simple
eigenvalues; the latter means that the algebraic multiplicity of
an eigenvalue can be greater than~$1$~\cite{Mar:88}. In such a
generic setting, the inverse spectral problem of reconstructing
the potentials~$p$ and~$q$ in~\eqref{eq:intr.spr} is very
complicated, and only partial existence results have been
established so far. The approach we are going to use gives a
complete solution in the special case where the spectra
of~$T_1(p,r)$ and~$T_2(p,r)$ are real and simple. A sufficient
condition for this to hold is, e.g., that there is a $\mu_*\in\bR$
such that the operator~$T_1(p,r)(\mu_*)$ is
negative~\cite{Pro:2011a}. Thus our standing assumption is that
\begin{itemize}
    \item [(A)] the operator pencil~$T_1(p,r)$ is hyperbolic~\cite[Ch.~IV, \S 31]{Mar:88},
    or,
    equivalently, there is a~$\mu_*\in \bR$ such that the operator~$T_1(p,r)(\mu_*)$ is
negative.
\end{itemize}

Under assumption~(A), the eigenvalues~$\mu_n$,~$n\in\bZ$,
of~$T_1(p,r)$ and~$\la_n$, $n\in\bZ^*:=\bZ\setminus\{0\}$,
of~$T_2(p,r)$ are all real, simple (see~\cite{Pro:2011a}) and can
be labelled in increasing order  so that~$\mu_n$ and~$\la_n$ obey
the asymptotics
\begin{equation*}
    \mu_n = \pi \left(n-\tfrac12\right) + p_0 + \tilde \mu_n, \quad
    \la_n = \pi n + p_0 + \tilde \la_n,
\end{equation*}
with $p_0:=\int_0^1 p(x)\,dx$ and $(\tilde \mu_n), (\tilde
\lambda_n)$ in $\ell_2$ (see \cite{Pro:2012}) and almost interlace
in the sense that~$\mu_{k}<\la_k<\mu_{k+1}$ when~$k\in \bZ^*$ (see
\cite{HryPro:2012p} and Section~\ref{sec:Dir}). Thus the pair of
spectra~$((\la_n),(\mu_n))$ forms then an element of the set~$SD$
defined as follows.

\begin{definition}
\label{def:SD}
 We denote by~$SD$ the family of all pairs~$(\bla, \bmu)$
 of increasing sequences~$\bla:=(\la_n)_{n\in\bZ^*}$ and~$\bmu:=(\mu_n)_{n\in\bZ}$ of real
numbers, which satisfy the following conditions:
\begin{itemize}
    \item [(i)] \emph{asymptotics:} there is an~$h\in\bR$ such that
\begin{equation}
\label{eq:pre.asy}
    \la_n = \pi n + h + \tilde \la_n,\quad
    \mu_n = \pi \left(n-\tfrac{1}{2}\right) + h + \tilde \mu_n,
\end{equation}
where $(\tilde\la_n)$ is a sequence in~$\ell_2(\bZ^*)$
and~$(\tilde\mu_n)$ is from~$\ell_2(\bZ)$;
    \item [(ii)] \emph{almost interlacing:}
    \begin{equation}
\label{eq:pre.aic}
    \mu_{k}<\la_k<\mu_{k+1}\quad \text{for every } k\in\bZ^{*}.
\end{equation}
\end{itemize}
\end{definition}

Take an arbitrary~$\la_0\in(\mu_0,\mu_1)$ and denote by~$\bla^*$
the sequence~$\bla$ augmented with~$\la_0$. Then the almost
interlacing condition~\eqref{eq:pre.aic} means that the
sequences~$\bla^*$ and~$\bmu$ strictly interlace.

\begin{remark}\label{rem:mu} Assume~(A) and let the enumeration of
the eigenvalues of~$T_1(p,r)$ and~$T_2(p,r)$ agree
with~\eqref{eq:pre.aic}; then the number~$\mu_*$ in assumption~(A)
belongs to~$(\mu_0,\mu_1)$. Moreover, the operator~$T_1(p,r)(\la)$
is negative for every~$\la\in(\mu_0,\mu_1)$, so that~(A) holds
with every~$\mu_*\in(\mu_0,\mu_1)$. The same holds
for~$T_2(p,r)(\mu)$; in particular, then $T_2(p,r)(\lambda)$ is
negative for every $\lambda\in (\mu_0,\mu_1)$.
\end{remark}

Denote by~$\bmu$ and~$\bla$ the spectra of the operator
pencils~$T_1(p,r)$ and~$T_2(p,r)$ respectively. Then the inverse
spectral problem of interest is to reconstruct the operator
pencils~$T_j(p,r)$,~$j=1,2$, given the spectra~$\bmu$ and~$\bla$.
Namely, we want to reconstruct the potential~$p$ and the
(regularized) potential~$r$ (determining both the potential~$q=r'$
and the right-end boundary condition for the operator
pencil~$T_1(p,r)$ in~\eqref{eq:intr.bcM}). Some properties of the
sets $\bmu$ and~$\bla$ are given in~\cite{Pro:2011a}; our aim here
is to give a complete characterization of these spectral data and
to find an algorithm reconstructing the potentials $p$ and $r$
from the spectral data.

It is easy to see that with~$p\equiv 0$, the spectral problems for
the operator pencils~$T_j(0,r)$ become the usual spectral problems
$A_jy = \la^2y$ for the Sturm--Liouville operators~$A_j$ with
potential~$q$ in~$ W_2^{-1}(0,1)$;
see~\cite{SavShk:1999,SavShk:2003}. In this case parts of the
sequences $\bmu$ and $\bla$ are redundant in the sense that
$\mu_{1-n}=-\mu_n$ and $\lambda_{-n}=-\lambda_n$.
In~\cite{HryMyk:2004} the authors proved that, just as in the
regular case when $q$ is integrable, the
spectra~$(\mu_n^2)_{n\in\bN}$ of~$A_1$ and~$(\la_n^2)_{n\in\bN}$
of~$A_2$ uniquely determine the regularized potential~$r$; see
also~\cite{SavShk:2005} for an alternative treatment. These papers
also suggest the algorithm of reconstructing the potential~$r$
from the spectra of~$A_1$ and~$A_2$.

In the inverse spectral problem of interest we want to determine
two real-valued potentials $p$ and $r$ of the operator
pencils~$T_j(p,r)$,~$j=1,2$. Since the information contained in
the spectra of~$T_j(p,r)$ is twice as large as for the standard
Sturm--Liouville operators one may hope that the inverse spectral
problem of reconstructing $p$ and $r$ from the spectral data
of~$T_j(p,r)$ is well posed.

As was already mentioned, the spectra of the operator
pencils~$T_j(p,r)$, $j=1,2$, form an element of~$SD$. Our main
result states that, conversely, every element of the set~$SD$
coincides with the spectral data for some operator
pencils~$T_j(p,r)$ under consideration and that these operator
pencils are uniquely determined by their spectra.

\begin{theorem}\label{thm:pre.main}
Assume that a pair~$(\bla,\bmu)$ of sequences of real numbers is
an element of~$SD$. Then there exist unique~$p,r\in
L_{2,\mathbb{R}}(0,1)$ such that~$\bmu$ and~$\bla$ are the spectra
of the pencils~$T_1(p,r)$ and~$T_2(p,r)$ respectively. Moreover,
the operator pencil~$T_1(p,r)$ satisfies assumption~(A).
\end{theorem}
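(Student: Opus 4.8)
The plan is to transfer the entire problem to the setting of Dirac operators, where a fully developed two-spectra inverse theory is available, and then to translate the result back. First I would invoke the reduction of Section~\ref{sec:Dir}: to each admissible pencil pair $T_1(p,r)$, $T_2(p,r)$ one associates a single Dirac operator $\cD(p,r)$ with a potential of the special form determined by $p$ and $r$, considered under a common boundary condition at $x=0$ and two different boundary conditions at $x=1$ (mirroring~\eqref{eq:intr.bcM} and~\eqref{eq:intr.bcD}), so that $\sg(T_1(p,r))=\bmu$ and $\sg(T_2(p,r))=\bla$ become precisely the two spectra of $\cD(p,r)$. Establishing that this correspondence $(p,r)\mapsto\cD(p,r)$ is a well-defined injection onto the class of special-form Dirac operators — and, in particular, recording how the two boundary conditions and the hyperbolicity hypothesis~(A) are encoded on the Dirac side — is the groundwork I would set up before attacking the theorem itself.

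Next, using the transformation operators of Section~\ref{sec:transf}, which intertwine $\cD(p,r)$ with the free (canonical) Dirac operator, I would compute the characteristic functions of the two Dirac boundary value problems and read off the asymptotics and zero-counting of their eigenvalues. This step is meant to show that the admissible data set $SD$ of Definition~\ref{def:SD} — the asymptotics~\eqref{eq:pre.asy} together with the almost-interlacing~\eqref{eq:pre.aic} — is exactly the image of the class of special-form Dirac operators under the two-spectra map, with the common phase shift $h$ corresponding to $p_0=\int_0^1 p(x)\,dx$. The forward inclusion, that the spectra of the pencils lie in $SD$, has essentially been stated already in Section~\ref{sec:pre}; the point here is to verify that $SD$ is also large enough to serve as the reconstruction class.

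The heart of the proof, carried out in Section~\ref{sec:Dir-rec}, is the inverse step. Given $(\bla,\bmu)\in SD$, I would apply the two-spectra inverse theorem for Dirac operators to produce a Dirac operator $\cD$ with the prescribed two spectra: the asymptotics~\eqref{eq:pre.asy} guarantee that the Gelfand--Levitan--type reconstruction yields an $L_2$ potential, while the strict interlacing of $\bmu$ with the augmented sequence $\bla^*$ guarantees solvability of the associated reconstruction equation (positivity of the relevant operator) and the required simplicity. The main obstacle — and the step I expect to require the most care — is to show that the reconstructed Dirac potential genuinely has the \emph{special form}, i.e.\ that it arises from a pair $(p,r)$ with $p,r\in L_{2,\bR}(0,1)$ and with a right-end boundary condition of the form~\eqref{eq:intr.bcM}. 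This is where the particular structure of $SD$ must be exploited: the realness and the symmetry constraints defining the special form have to be matched against the structure of the data, and the extraction of $p$ from the diagonal part and of the primitive $r$ from the off-diagonal part together with the right-end boundary condition must be shown to be consistent and to land in $L_{2,\bR}(0,1)$.

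Finally, once $(p,r)$ is extracted I would verify assumption~(A): by Remark~\ref{rem:mu} it suffices to exhibit a real point at which $T_1(p,r)$ is negative, and the almost-interlacing~\eqref{eq:pre.aic} forces the gap $(\mu_0,\mu_1)$ to be a genuine spectral gap of the correct index, whence hyperbolicity follows. Uniqueness then splits into two pieces that I would treat in turn: injectivity of the reduction $(p,r)\mapsto\cD(p,r)$, already secured in the first step, and uniqueness in the Dirac two-spectra inverse problem within the special-form class. Combining them, the pair $(p,r)$ with $\sg(T_1(p,r))=\bmu$ and $\sg(T_2(p,r))=\bla$ is determined uniquely, which completes the proof.
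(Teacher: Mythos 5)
Your outline follows the paper's route: reduce the pencils to Dirac operators with potentials of the special form \eqref{eq:Dir.P}, solve the two-spectra inverse problem for Dirac operators on the augmented data $(\bla^*,\bmu)$, and pull the answer back to $(p,r)$; the uniqueness split you propose is also the one the paper uses. However, the step you yourself flag as the hardest --- showing that the reconstructed Dirac potential can be taken in the special form --- is exactly where your proposal stops short of an argument, and it is not a routine matching of ``realness and symmetry constraints against the structure of the data.'' The classical reconstruction yields a potential $Q$ in the shifted AKNS class $\cQ_h$, which is \emph{not} of the form \eqref{eq:Dir.P}; one must then locate, inside the isospectral set $\Iso(Q)$, a potential lying in $\cP_{\mu_*}$ (constant diagonal entry $p_{11}=\mu_*$, symmetric off-diagonal part). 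The paper does this in two moves: Lemma~\ref{lem:QRP_exi_uni} produces the unique gauge $R=e^{\theta_2 J}$, with $\theta_1\equiv 0$ and $\theta_2$ solving the first-order ODE \eqref{eq:Rec.theta} with $\theta_2(0)=0$, that conjugates $\ell(Q)$ into $\ell(P)$ with $P\in\cP_{\mu_*}$; and Theorem~\ref{thm:sp.data.coinc.crit} shows this conjugation preserves \emph{both} spectra only if $\theta_2(1)\in\pi\bZ$, since otherwise the boundary conditions at $x=1$ are rotated and the spectra change.

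The verification that $\theta_2(1)\in\pi\bZ$ is the real crux, and it is here that the choice of the augmentation value matters in a way your sketch does not capture: one must augment $\bla$ by $\la_0=\mu_*$ with $\mu_*$ equal to the very constant prescribed as the diagonal entry $p_{11}$ of the target class $\cP_{\mu_*}$. Because $\la=\mu_*$ is then an eigenvalue of $\sD_2(Q)$ and $p_{11}=\mu_*$, the corresponding eigenfunction of the conjugated operator has identically vanishing second component, and evaluating $R(1)$ on it forces $\sin\theta_2(1)=0$ (this is the proof of Theorem~\ref{th:exist_uniq_P}). If you augment by a point of $(\mu_0,\mu_1)$ unrelated to the diagonal normalization of the target class, this argument is unavailable. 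So to complete your plan you still need (i) the characterization of isospectrality via the transformation operator reducing to the unitary multiplication $\sR$ with $\theta_2(1)\in\pi\bZ$, which in the paper rests on the norming constants being determined by the two spectra together with Lemma~4.4 of \cite{HryPro:2012}, and (ii) the eigenfunction argument tying $\theta_2(1)$ to the augmented eigenvalue. Everything else in your outline is consistent with the paper's proof.
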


The proof of this theorem is constructive and suggests the
reconstruction algorithm determining the potentials~$p$ and $r$
from the sets~$\bla$ and~$\bmu$, see Section~\ref{sec:alg}.

As was already mentioned, our approach consists in reducing the
spectral problems for operator pencils~$T_j(p,r)$, $j=1,2$, to the
spectral problems for Dirac operators in~$L_2(0,1)\times L_2(0,1)$
with appropriate boundary conditions, see Section~\ref{sec:Dir}.
Using a suitable unitary gauge transformation we can reduce these
Dirac operators to those in the ``shifted'' AKNS normal form. For
AKNS Dirac operators, the direct and inverse spectral problems are
well understood,
see~\cite{LevSar:1991,GasDza:1966,AlbHryMk:2005:RJMP}. Using known
methods we  shall reconstruct the AKNS Dirac operators from the
given spectra and then transform them to the Dirac operators
directly associated with the operator pencils of interest keeping
the spectra unchanged. This will give the sought potentials~$p$
and~$r$ in an explicit form, see~\eqref{eq:pq}.


\section{Reduction to the Dirac system}\label{sec:Dir}


In this section we are going to show that under assumption~(A) the
spectral problems for the operator pencils~$T_1(p,r)$
and~$T_2(p,r)$ can be reduced to the spectral problems for special
Dirac operators. The reduction crucially relies on the fact that
the equation~$y''=q_*y$ with~$q_*:=q+2\mu_*p-\mu_*^2$ and~$\mu_*$
from~(A)  has a solution which is strictly positive on~$[0,1]$.

Observe first that this equation can be recast
as~$\ell(y)=\mu_*^2y-2\mu_*py$ and treated as the first order
linear system~$u_1'=ru_1+u_2$,
$u_2'=(2\mu_*p-\mu_*^2-r^2)u_1-ru_2$, where~$u_1=y$
and~$u_2=y^{[1]}$. Therefore for every complex~$a$ and~$b$ it
possesses a unique solution satisfying the conditions~$y(1)=a$
and~$y^{[1]}(1)=b$.  Let~$z$ denote the solution of the
equation~$y''=q_*y$ subject to the conditions
\[
    z(1)=1,\quad z^{[1]}(1)=0.
\]
We are going to show that~$z$ does not vanish on~$[0,1]$.
\begin{lemma}
Under the standing assumption~(A) the function~$z$ is strictly
positive on~$[0,1]$.
\end{lemma}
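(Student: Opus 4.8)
The plan is to read hypothesis~(A) as positive definiteness of an auxiliary Sturm--Liouville operator and to rule out zeros of~$z$ by a Dirichlet cut-off argument in the associated quadratic form. First I would set $S:=-T_1(p,r)(\mu_*)$. Since $T_1(p,r)(\mu_*)y=\mu_*^2y-2\mu_* py-\ell(y)=y''-q_*y$, the operator $S$ acts as $Sy=-y''+q_*y=\ell(y)+2\mu_* py-\mu_*^2y$ on $\dom A_1$, i.e.\ under the boundary conditions $y(0)=y^{[1]}(1)=0$. Because $A_1$ is self-adjoint and bounded below while $B$ has relative $A_1$-bound~$0$, the operator $S$ is self-adjoint with discrete spectrum bounded below, and assumption~(A) says precisely that $S$ is positive definite; hence its closed quadratic form $\mathfrak{s}$ satisfies $\mathfrak{s}[y]>0$ for every nonzero $y$ in its form domain.

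Next I would record the form explicitly. Writing $\ell$ through the quasi-derivative and integrating by parts once (the only boundary term being $[y^{[1]}\bar y]_0^1$, which drops out under $y(0)=0$ and $y^{[1]}(1)=0$), one obtains for $y$ in the form domain
\[
\mathfrak{s}[y]=\int_0^1\bigl|y^{[1]}\bigr|^2\,dx-\int_0^1\bigl(r^2-2\mu_* p+\mu_*^2\bigr)|y|^2\,dx.
\]
Here $y(0)=0$ is the essential boundary condition while $y^{[1]}(1)=0$ is natural, so the form domain consists of those $y\in AC[0,1]$ with $y^{[1]}\in L_2(0,1)$ and $y(0)=0$; in particular it requires only $y^{[1]}\in L_2$ rather than $y^{[1]}\in AC[0,1]$.

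Then comes the main step. Note that $z$ solves $Sz=0$ classically together with $z(1)=1$, $z^{[1]}(1)=0$, so $z$ obeys the natural boundary condition at the right end. Suppose, for contradiction, that $z$ vanishes somewhere in $[0,1)$ (it cannot vanish at $1$ since $z(1)=1$), and let $x_0$ be the largest such zero. I would introduce the cut-off $\tilde z:=z\,\chi_{[x_0,1]}$, extended by $0$ on $[0,x_0]$. Since $z(x_0)=0$, the function $\tilde z$ is continuous, lies in $AC[0,1]$, has $\tilde z^{[1]}\in L_2(0,1)$, and satisfies $\tilde z(0)=0$; thus $\tilde z$ belongs to the form domain even though $\tilde z^{[1]}$ jumps at $x_0$, this being permitted because the form sees only $y^{[1]}\in L_2$. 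Multiplying $Sz=0$ by $z$ and integrating over $[x_0,1]$, the boundary contribution $[z^{[1]}z]_{x_0}^1$ vanishes since $z(x_0)=0$ and $z^{[1]}(1)=0$; as $\tilde z$ is supported on $[x_0,1]$, this yields $\mathfrak{s}[\tilde z]=0$. Because $\tilde z\neq0$, this contradicts the strict positivity of $\mathfrak{s}$.

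Therefore $z$ has no zero on $[0,1]$, and being continuous with $z(1)=1>0$ it is strictly positive throughout. The step I expect to be the main obstacle is the careful treatment of the distributional potential: one must verify that the quasi-derivative formulation yields the displayed form with the correct finite boundary term, and that the Dirichlet cut-off $\tilde z$, whose quasi-derivative is discontinuous at $x_0$, genuinely lies in the form domain of $S$ so that the positivity supplied by~(A) applies to it. Everything else is the standard equivalence between positive definiteness of $\mathfrak{s}$ and disconjugacy of $-y''+q_*y=0$ under the given boundary conditions.
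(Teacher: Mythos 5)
Your proposal is correct and follows essentially the same route as the paper: identify the closed quadratic form of $-T_1(p,r)(\mu_*)$, show its form domain is $\{y\in W_2^1(0,1)\mid y(0)=0\}$ (the paper does this via relative form-boundedness and Kato's Theorem~VI.1.33, where you appeal to the essential/natural boundary-condition dichotomy), and derive a contradiction by inserting the cut-off $z\chi_{[x_0,1]}$, on which the form vanishes. Your quasi-derivative expression for the form is identical to the paper's $\int_0^1|y'|^2+\int_0^1(2\mu_*p-\mu_*^2)|y|^2-2\operatorname{Re}(ry',y)$ after expanding $|y^{[1]}|^2$.
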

\begin{proof}
By assumption, the operator~$-T_1(p,r)(\mu_*)$ is uniformly
positive and thus such is the closure~$\mathfrak{t}_1$ of its
quadratic form. For~$y\in \dom T_1(p,r)$ we find by integration by
parts that
\[
    \mathfrak{t}_1[y]=(-T_1(p,r)(\mu_*)y,y)=\int_0^1|y'|^2+\int_0^1(2\mu_*p-\mu_*^2)|y|^2-2\operatorname{Re}(ry',y).
\]
The quadratic form~$\mathfrak{t}_0[y]=\int_0^1|y'|^2$ considered
on~$\dom T_1(p,r)$ is closable and its
closure~$\tilde{\mathfrak{t}}_0$ acts by the same formula on the
domain
\[
    \dom \tilde{\mathfrak{t}}_0 :=\{u\in W_2^1(0,1) \mid u(0)=0\}.
\]
 Since the quadratic
form~$\int_0^1(2\mu_*p-\mu_*^2)|y|^2-2\operatorname{Re}(ry',y)$ is
relatively bounded with respect to the
form~$\tilde{\mathfrak{t}}_0[y]$ with relative bound~$0$
(cf.~\cite{HryMyk:2001}), Theorem~VI.1.33 of~\cite{Kat:1966}
implies that the domain of~$\mathfrak{t}_1$ coincides with $\dom
\tilde{\mathfrak{t}}_0$, i.e.,
\[
    \dom \mathfrak{t}_1=\{u\in W_2^1(0,1)\mid u(0)=0\}.
\]

Now assume that~$z$ has zeros on~$[0,1]$ and denote the largest of
them by~$x_0$. Then the function
\[
    u(x):=\left\{\begin{array}{ll}
            0, & x<x_0 \\
            z(x), & x\ge x_0 \\
                 \end{array}\right.
\]
belongs to the domain of~$\mathfrak{t}_1$ and is not identically
zero. It is straightforward that~$\mathfrak{t}_1[u]=0$, which
contradicts positivity of~$-T_1(p,r)(\mu_*)$. Therefore~$z$ does
not vanish on~$[0,1]$ yielding the assertion of the lemma.
\end{proof}

Setting~$v:=z'/z$, one easily verifies that $q_*=v'+v^2$;
hence~$q_*$ is a Miura potential~\cite{KapPerShuTop:2005} and the
differential expression~$-u''+q_*u$ can be written in the
factorized form, viz.
\begin{equation}\label{eq:Dir.Afact}
   -u''+q_*u  = -\Bigl(\frac{d}{dx}+v\Bigr)\Bigl(\frac{d}{dx}-v\Bigr)u.
\end{equation}
Since~$z$ satisfies~$(z'-rz)(1)=0$, we have that~$(v-r)(1)=0$.

For $\la\ne\mu_*$ consider the functions~$u_2:=y$
and~$u_1:=(y'-vy)/(\la-\mu_*)$. Then equation~\eqref{eq:intr.spr}
can be recast as the following first order system for~$u_1$
and~$u_2$,
\begin{align}\label{eq:Dir.system1}
    u_2' - v u_2 &= (\la-\mu_*) u_1,\\
    -u_1'-v u_1 + 2 p u_2 &= (\la+\mu_*) u_2. \label{eq:Dir.system2}
\end{align}
Setting
\begin{equation}\label{eq:Dir.P}
    J:=\left(
           \begin{array}{cc}
             0 & 1 \\
             -1 & 0 \\
           \end{array}
         \right),
\qquad
    P:=\left(
                \begin{array}{cc}
                  \mu_* & -v \\
                  -v & 2p-\mu_* \\
                \end{array}
              \right),
\qquad
    \bu(x)=\binom{u_1}{u_2},
\end{equation}
we see that the above system is the spectral problem
$\ell(P)\bu=\la\bu$ for a Dirac differential expression $\ell(P)$
acting in $L_2(0,1)\times L_2(0,1)$ via
\begin{equation}
\label{eq:dif.exp.Dirac}
    \ell(P)\bu =J\frac{d\bu}{dx}+P\bu
\end{equation}
on suitable domain.

Denote by~$\sD_j(P)$,~$j=1,2$, the Dirac operators generated
by~$\ell (P)$ on the domains
\[
    \dom\sD_j(P):=\{\bu=(u_1,u_2)^\mathrm{t}\in W_2^1(0,1)\times W_2^1(0,1)
        \mid u_2(0)=u_j(1)=0\}.
\]

It turns out that the spectra of~$\sD_j(P)$ and~$T_j(p,r)$ are
closely related.
\begin{lemma}\label{lem:Dir.spectra}
The spectra of the Dirac operators~$\sD_j(P)$ and the operator
pencils~$T_j(p,r)$, $j=1,2$, are related in the following way:
\begin{align}\label{eq:Dir.coin1}
    \sigma\bigl(\sD_1(P)\bigr) &= \sigma(T_1({p,r})),\\
    \sigma\bigl(\sD_2(P)\bigr) &= \sigma(T_2({p,r})) \cup
    \{\mu_*\}.\label{eq:Dir.coin2}
\end{align}
Moreover,~$y$ is an eigenfunction of~$T_j(p,r)$ corresponding to
an eigenvalue~$\la$ if and only if~$\bu=(u_1,u_2)^\mathrm{t}$
with~$u_1:=(y'-vy)/(\la-\mu_*)$ and~$u_2=y$  is an eigenfunction
of~$\sD_j(P)$ corresponding to~$\la\ne\mu_*$.
\end{lemma}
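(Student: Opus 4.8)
The plan is to deduce the spectral identities~\eqref{eq:Dir.coin1}--\eqref{eq:Dir.coin2} from a matching of eigenvalues, which is legitimate because both the pencils~$T_j(p,r)$ and the Dirac operators~$\sD_j(P)$ have discrete spectra consisting entirely of eigenvalues (for~$T_j$ this was recalled above; for~$\sD_j(P)$ it follows from the standard theory, the potential~$P$ having~$L_2$ entries since~$v\in L_2$). Thus everything reduces to the eigenfunction correspondence, which I would prove separately for~$\la\neq\mu_*$ and for~$\la=\mu_*$.

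For~$\la\neq\mu_*$ the equivalence at the level of differential equations is already contained in the passage from~\eqref{eq:intr.spr} to the system~\eqref{eq:Dir.system1}--\eqref{eq:Dir.system2}: running that computation backwards recovers~$-y''+qy+2\la py=\la^2y$ from~$\ell(P)\bu=\la\bu$. The substantive work is the bookkeeping of domains and boundary conditions. The boundary conditions match because~$u_2(0)=y(0)$ and~$u_2(1)=y(1)$, while at the right endpoint~$u_1(1)=y^{[1]}(1)/(\la-\mu_*)$ by virtue of the identity~$(v-r)(1)=0$ established above; hence~$u_2(0)=u_j(1)=0$ is equivalent to the conditions defining~$\dom T_j(p,r)$. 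Nonvanishing is automatic in one direction since~$u_2=y$, and in the other since~$u_2\equiv0$ would force~$u_1\equiv0$ through the first equation of the system (as~$\la\neq\mu_*$).

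The only point requiring genuine care is the matching of regularity, and here the key observation is that~$v-r=z^{[1]}/z$ is \emph{absolutely continuous} on~$[0,1]$: indeed~$z^{[1]}\in AC[0,1]$ and~$z$ is strictly positive there by the previous lemma. Consequently the transformation and its inverse preserve the relevant function spaces. Starting from~$y\in\dom\ell$ one checks~$u_1,u_2\in W_2^1(0,1)$ using~$y'=y^{[1]}+ry$, the membership~$v\in L_2$, and~$u_1'=-vu_1+(2p-\la-\mu_*)u_2$ read off from~\eqref{eq:Dir.system2}. Conversely, from~$\bu\in W_2^1\times W_2^1$ one gets~$y=u_2\in\dom\ell$: the crucial membership~$y^{[1]}=(v-r)y+(\la-\mu_*)u_1\in AC[0,1]$ holds precisely because~$v-r\in AC$, and then~$\ell(y)=\la^2y-2\la py\in L_2$ since~$y\in W_2^1\subset C[0,1]$ and~$p\in L_2$. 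This establishes the eigenfunction correspondence and, in particular, shows that~$T_j(p,r)$ and~$\sD_j(P)$ share all eigenvalues~$\la\neq\mu_*$.

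It remains to analyse the point~$\la=\mu_*$, which is exactly what produces the asymmetry between~\eqref{eq:Dir.coin1} and~\eqref{eq:Dir.coin2}. On the pencil side,~$\mu_*\notin\sigma(T_1(p,r))$ by assumption~(A) (the operator~$T_1(p,r)(\mu_*)$ being negative, hence invertible) and~$\mu_*\notin\sigma(T_2(p,r))$ by Remark~\ref{rem:mu}. On the Dirac side I would solve~\eqref{eq:Dir.system1}--\eqref{eq:Dir.system2} at~$\la=\mu_*$ explicitly: the first equation gives~$u_2=Cz$, and~$u_2(0)=0$ together with~$z(0)>0$ forces~$u_2\equiv0$; the second then reduces to~$u_1'=-vu_1$, so~$u_1=D/z$. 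For~$\sD_1(P)$ the condition~$u_1(1)=0$ forces~$D=0$, so~$\mu_*$ is not an eigenvalue and~\eqref{eq:Dir.coin1} follows; for~$\sD_2(P)$ the condition~$u_2(1)=0$ holds automatically, leaving~$(1/z,0)^{\mathrm t}$ as a genuine eigenfunction, so~$\mu_*\in\sigma(\sD_2(P))$ and~\eqref{eq:Dir.coin2} follows. The main obstacle throughout is not any single hard estimate but the careful domain and boundary-condition tracking in the distributional ($W_2^{-1}$) setting, all of which hinges on the regularity~$v-r\in AC[0,1]$.
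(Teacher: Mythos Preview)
Your argument is correct and follows essentially the same route as the paper's: establish the eigenfunction bijection for~$\la\neq\mu_*$ via the first-order reformulation~\eqref{eq:Dir.system1}--\eqref{eq:Dir.system2}, use~$(v-r)(1)=0$ to match the~$j=1$ boundary condition, and handle~$\la=\mu_*$ by direct computation (your eigenfunction~$(1/z,0)^{\mathrm t}$ agrees with the paper's~$(\exp(-\int v),0)^{\mathrm t}$ up to a constant). The paper itself is terse here, deferring the details to~\cite{HryPro:2012}; your explicit observation that~$v-r=z^{[1]}/z\in AC[0,1]$, and hence that~$u_1=(y^{[1]}+(r-v)y)/(\la-\mu_*)$ is continuous, is exactly the ingredient needed to make the domain matching go through in the~$W_2^{-1}$ setting and is a genuine addition over what the paper writes out.
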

\begin{proof}
It is straightforward to see that~$\la=\mu_*$ is an eigenvalue of
the Dirac operator~$\sD_2(P)$, the corresponding eigenfunction
being $\bu=(u_1,u_2)^{\mathrm{t}}$ with $u_1 = \exp(-\int v)$ and
$u_2\equiv0$.  However, by Remark~\ref{rem:mu}, under the standing
assumption~(A) the number $\lambda=\mu_*$ is not an eigenvalue
of~$T_2(p,r)$. The proof of~\eqref{eq:Dir.coin2} uses this fact
and is analogous to the proof of the corresponding lemma
in~\cite{HryPro:2012}. The coincidence of the
spectra~$\sigma\bigl(\sD_1(P)\bigr)$ and~$\sigma(T_1(p,r))$ is
proved along the same lines using the fact that the boundary
condition~$y^{[1]}(1)=0$ is equivalent to~$u_1(1)=0$ in view of
the relation~$(v-r)(1)=0$. The last statement of the lemma is
obtained by direct verification.
\end{proof}


\section{Transformation operator}\label{sec:transf}


Lemma~\ref{lem:Dir.spectra} suggests that we can try to use the
spectra~$\bmu$ and~$\bla$ of the operator pencils~$T_1(p,r)$
and~$T_2(p,r)$ in order to find the related Dirac
operators~$\sD_j(P)$,~$j=1,2$. Having determined the
potential~$P=(p_{ij})_{i,j=1}^2$ of $\ell(P)$, we then identify
the potentials~$p$ and $r$ of the operator pencil~$T_j({p,r})$ as
$p:=(p_{22}+p_{11})/2$ and $r: =
-p_{12}-\int_x^{1}(p_{12}^2-p_{11}p_{22})$.

However, the classical inverse spectral theory reconstructs a
Dirac operator with potential in the AKNS form or in other
canonical form. Thus to find the Dirac operators directly
associated with the operator pencils~$T_j(p,r)$, i.e.\ with
potentials of the form~\eqref{eq:Dir.P}, we have to transform the
obtained Dirac operators in canonical form keeping the spectral
data unchanged. This is done by means of the so-called
transformation operators. In~\cite{HryPro:2012} such operators
were constructed and some of their properties were discussed; see
also~\cite{CoxKno:1996,LevSar:1991}. In this section we shall
focus our attention on some further properties of the
transformation operators that are important for our reconstructing
procedure.

Assume that $P$ and $Q$ are $2\times 2$ matrix-valued potentials
in $L_2((0,1),\mathcal{M}_2)$ and set
\[
    \cD_0:=\{(u_1,u_2)^{\mathrm{t}}\in W_2^1(0,1)\times W_2^1(0,1)
    \mid u_2(0)=0\}.
\]
The transformation operator  $\sX=\sX(P,Q)$ between the Dirac
operators~$\ell(P)$ and $\ell(Q)$ on the set~$\cD_0$, i.e.\ the
nontrivial operator satisfying the
relation~$\sX\ell(P)\bu=\ell(Q)\sX\bu$ for all $\bu\in\cD_0$, was
constructed in~\cite{HryPro:2012} in the form
\begin{equation}
\label{eq:Tr.op} \sX\bu(x)=R(x)\bu(x)+\int_0^xK(x,s)\bu(s)ds,
\end{equation}
where $R$ and $K$ are $2\times 2$ matrix-valued functions of one
and two variables respectively. Under the normalization~$R(0)=I$,
the operator $R$ is explicitly given~\cite{CoxKno:1996} as
\begin{equation}
    \label{eq:Dir.R}
    R(x)=e^{\theta_1(x)}\left(%
    \begin{array}{cc}
    \cos\theta_2(x) & \sin\theta_2(x) \\
     -\sin\theta_2(x) & \cos\theta_2(x) \\
    \end{array}%
    \right) = e^{\theta_1(x)I+\theta_2(x)J},
\end{equation}
with
\begin{equation}\label{eq:Dir.theta}
   \begin{aligned}
        \theta_1(x)=\frac{1}{2}\int_0^x\mathrm{tr}[J(Q(s)-P(s))]\,ds,\\
        \theta_2(x)=\frac{1}{2}\int_0^x\mathrm{tr}(Q(s)-P(s))ds.
    \end{aligned}
\end{equation}
Existence of the transformation operator~$\sX(P,Q)$ is guaranteed
by the following two theorems.

\begin{theorem}[\hspace*{-5pt}\cite{HryPro:2012}]\label{thm:Dir.tr-op}
Assume that $P$ and $Q$ are in $L_2((0,1),\mathcal{M}_2)$. Then an
operator~$\sX(P,Q)$ of the form~\eqref{eq:Tr.op}, with~$R$ obeying
the condition~$R(0)=I$ and a summable kernel~$K$, is a
transformation operator for~$\ell(P)$ and~$\ell(Q)$ on the
set~$\cD_0$ if and only if the matrix-valued function~$R$ is given
by~\eqref{eq:Dir.R}--\eqref{eq:Dir.theta} and the kernel~$K$ is a
mild solution of the partial differential equation
\begin{equation}
\label{eq:Dir.K}
    J\partial_xK(x,y)+\partial_yK(x,y)J=K(x,y)P(y)-Q(x)K(x,y)
\end{equation}
in the domain~$\Omega:=\{(x,y)\mid 0<y<x<1\}$ satisfying
for~$0\leq x\leq1$ the boundary conditions
\begin{align}\label{eq:K_b.c_1}
    K(x,x)J-JK(x,x)&=JR'(x)+Q(x)R(x)-R(x)P(x),\\
    K_{12}(x,0)&=K_{22}(x,0)=0.\label{eq:K_b.c_2}
\end{align}
\end{theorem}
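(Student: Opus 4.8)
The plan is to prove the equivalence by substituting the ansatz \eqref{eq:Tr.op} into the intertwining relation $\sX\ell(P)\bu=\ell(Q)\sX\bu$ and matching terms. First I would compute $\ell(Q)\sX\bu$ directly. Applying $\ell(Q)=J\frac{d}{dx}+Q$ to $\sX\bu(x)=R(x)\bu(x)+\int_0^xK(x,y)\bu(y)\,dy$, the derivative produces the boundary term $JK(x,x)\bu(x)$ from differentiating the upper limit of the integral, together with $JR'(x)\bu(x)$, $JR(x)\bu'(x)$, and $J\int_0^x\partial_xK(x,y)\bu(y)\,dy$; the multiplication by $Q$ adds $Q(x)R(x)\bu(x)+Q(x)\int_0^xK(x,y)\bu(y)\,dy$.

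Next I would compute $\sX\ell(P)\bu$. Here the key move is to handle the term $\sX J\bu'$ coming from $\ell(P)\bu=J\bu'+P\bu$. The piece $\int_0^xK(x,y)J\bu'(y)\,dy$ must be integrated by parts in $y$, which generates a boundary contribution $K(x,x)J\bu(x)-K(x,0)J\bu(0)$ and the integral $-\int_0^x\partial_yK(x,y)J\bu(y)\,dy$. The boundary term at $y=0$ is where the condition $u_2(0)=0$ on $\cD_0$ enters: writing out $K(x,0)J\bu(0)$ and using $u_2(0)=0$, one sees that this term vanishes precisely when the first column of $K(x,0)J$ is annihilated, which translates into $K_{12}(x,0)=K_{22}(x,0)=0$, giving \eqref{eq:K_b.c_2}. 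The remaining pieces $R(x)J\bu'(x)+R(x)P(x)\bu(x)+\int_0^xK(x,y)P(y)\bu(y)\,dy$ come directly.

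Then I would equate the two expressions. The terms containing $\bu'(x)$ are $JR(x)\bu'(x)$ on one side and $R(x)J\bu'(x)$ on the other; since $R=e^{\theta_1 I+\theta_2 J}$ commutes with $J$, these cancel automatically, and this commutation is exactly why $R$ must have the form \eqref{eq:Dir.R}. Matching the remaining pointwise (non-integral) terms proportional to $\bu(x)$ yields the boundary relation \eqref{eq:K_b.c_1}: the coefficients are $JK(x,x)+JR'(x)+Q(x)R(x)$ from the $\ell(Q)$ side and $K(x,x)J+R(x)P(x)$ from the $\sX\ell(P)$ side. Finally, matching the integral kernels against the arbitrary test function $\bu(y)$ forces the integrands to agree, producing $J\partial_xK+\partial_yK\,J = K(x,y)P(y)-Q(x)K(x,y)$, which is \eqref{eq:Dir.K}. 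The formula \eqref{eq:Dir.theta} for $\theta_1,\theta_2$ then follows by evaluating the trace of \eqref{eq:K_b.c_1} and of its product with $J$, using $\tr(JK(x,x)-K(x,x)J)=0$ to eliminate the unknown $K(x,x)$ and leaving a first-order ODE for $\theta_1,\theta_2$ with initial condition from $R(0)=I$.

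The main obstacle I expect is the rigorous justification of the integration by parts and the term-matching at the level of \emph{mild} (rather than classical) solutions: since $K$ is only assumed summable and $P,Q\in L_2$, the manipulations above must be read in an integrated/distributional sense, and one has to argue that vanishing of the integral pairing against all $\bu\in\cD_0$ genuinely forces \eqref{eq:Dir.K} as a mild solution. I would address this by first establishing the identity for smooth $\bu$ and smooth approximations of $P,Q,K$, then passing to the limit using the $L_2$ and $L_1$ bounds; the density of smooth functions in $\cD_0$ and continuity of $\sX$ and $\ell(\cdot)$ in the appropriate topologies close the argument. The converse direction—that any $R,K$ of the stated form yield a genuine transformation operator—is then a direct reversal of the same computation and requires no new ideas beyond reversing the integration by parts.
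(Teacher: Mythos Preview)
The paper does not contain its own proof of this theorem: the result is quoted verbatim from the cited reference \cite{HryPro:2012} and is used here as a black box, so there is no in-paper argument to compare against. Your sketch is the standard derivation one would expect (and presumably what \cite{HryPro:2012} does): substitute the ansatz into the intertwining relation, integrate by parts in $y$, and read off the PDE~\eqref{eq:Dir.K}, the diagonal condition~\eqref{eq:K_b.c_1}, the condition~\eqref{eq:K_b.c_2} from the boundary term at $y=0$ using $u_2(0)=0$, and the commutation $RJ=JR$ from the $\bu'$-terms.

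One small point to sharpen: the commutation $RJ=JR$ only tells you $R(x)=a(x)I+b(x)J$ for some scalars $a,b$; it does not by itself force the exponential form $e^{\theta_1 I+\theta_2 J}$ with the specific $\theta_1,\theta_2$ of~\eqref{eq:Dir.theta}. The determination of $\theta_1,\theta_2$ comes, as you note, from the two scalar constraints obtained by observing that the left-hand side of~\eqref{eq:K_b.c_1} is automatically traceless and $J$-traceless (i.e., $\tr(\cdot)=\tr(J\,\cdot)=0$), which yields first-order ODEs for $\theta_1,\theta_2$ that integrate to~\eqref{eq:Dir.theta} once $R(0)=I$ is imposed. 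Your acknowledged obstacle about justifying the computation at the level of mild solutions with merely summable $K$ and $L_2$ potentials is real and is exactly where the work in \cite{HryPro:2012} lies; the approximation-and-limit strategy you outline is the natural route.
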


\begin{theorem}[\hspace*{-3pt}\cite{HryPro:2012}]
\label{thm:K.exist} Assume that matrix-valued functions $P$ and
$Q$ are in~$L_2((0,1),\mathcal{M}_2)$. Then the
system~\eqref{eq:Dir.K}--\eqref{eq:K_b.c_2} has a unique solution
in the sense of distributions; moreover, this solution belongs
to~$L_2(\Omega,\mathcal{M}_2)$.
\end{theorem}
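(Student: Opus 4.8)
The plan is to prove existence and uniqueness of the kernel $K$ solving the system \eqref{eq:Dir.K}--\eqref{eq:K_b.c_2} by converting the first-order hyperbolic PDE with its boundary data into an equivalent integral equation and then solving that integral equation by Picard iteration in $L_2(\Omega,\mathcal{M}_2)$. First I would exploit the structure of the principal part $J\partial_x + \partial_y J$ in \eqref{eq:Dir.K}. Since $J^2=-I$, left-multiplying the equation by $J$ turns the operator into $-\partial_x + J\partial_y J$, which decouples along characteristics; concretely, writing $K$ in the basis $\{I,J\}$ for its ``diagonal'' part and the complementary matrices for its ``off-diagonal'' part, the two families of components propagate along the two characteristic directions $y=\text{const}$ and $x+y=\text{const}$ (equivalently $x-y=\text{const}$). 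The boundary condition \eqref{eq:K_b.c_1} on the diagonal $x=y$ together with \eqref{eq:K_b.c_2} on the edge $y=0$ then furnishes exactly the Cauchy data needed to integrate each component back along its characteristic to an arbitrary interior point of $\Omega$.

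Carrying this out, I would integrate the transport equations along characteristics to obtain a representation of $K(x,y)$ as a sum of boundary terms (built from $R$, $R'$, $P$, $Q$ via \eqref{eq:K_b.c_1} and the normalization $R(0)=I$) plus a double integral of the right-hand side $K(x,y)P(y)-Q(x)K(x,y)$ along the characteristic triangles. This yields a Volterra-type integral equation $K = K_0 + \mathcal{T}K$, where $K_0\in L_2(\Omega,\mathcal{M}_2)$ collects the explicit inhomogeneous data and $\mathcal{T}$ is the integral operator with kernel involving $P$ and $Q$. The main point is that $\mathcal{T}$ is a Volterra operator on the triangular domain $\Omega$: its iterates $\mathcal{T}^n$ gain an extra factor from integrating over nested sub-triangles, so the operator norms $\|\mathcal{T}^n\|$ decay super-exponentially (the usual $\|P\|_{L_2}^n\|Q\|_{L_2}^n/n!$-type bound after a Cauchy--Schwarz estimate accounting for the $L_2$ rather than $L_\infty$ regularity of the potentials). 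Consequently $I-\mathcal{T}$ is boundedly invertible via the Neumann series $\sum_{n\ge 0}\mathcal{T}^n$, giving a unique solution $K=(I-\mathcal{T})^{-1}K_0$ in $L_2(\Omega,\mathcal{M}_2)$, which establishes both existence and uniqueness simultaneously.

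The hard part will be handling the low regularity: because $P$ and $Q$ lie only in $L_2$ rather than in $L_\infty$ or $C$, neither the kernel $K$ nor its traces are classically defined, so the transport/characteristic argument must be interpreted in the distributional (mild solution) sense, and the boundary condition \eqref{eq:K_b.c_1} on $x=x$ and \eqref{eq:K_b.c_2} on $y=0$ must be read as traces of an $L_2$ function. I would address this by first proving the estimates on $\mathcal{T}^n$ for smooth potentials, where the characteristic computation is literal, obtaining bounds depending only on $\|P\|_{L_2}$ and $\|Q\|_{L_2}$, and then passing to general $L_2$ potentials by a density argument: approximating $P,Q$ by smooth $P_k,Q_k$, the corresponding solutions $K_k$ form a Cauchy sequence in $L_2(\Omega,\mathcal{M}_2)$ by the uniform Neumann-series bound, and the limit is checked to be a mild solution of \eqref{eq:Dir.K}--\eqref{eq:K_b.c_2}. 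Since this existence-uniqueness result is precisely the content of Theorem~\ref{thm:K.exist} as cited from~\cite{HryPro:2012}, in the present paper I would most naturally record the reduction to the Volterra integral equation, state the contraction/Neumann-series estimate with the $1/n!$ gain, and refer to~\cite{HryPro:2012} for the verification that the resulting $L_2$ function indeed satisfies the PDE and boundary conditions in the distributional sense.
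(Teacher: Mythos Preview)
The paper does not prove this theorem at all: it is stated as a quotation from~\cite{HryPro:2012} and no argument is supplied here. So there is nothing in the present paper to compare your proposal against beyond the bare citation.

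That said, your outline is a faithful sketch of the standard way such results are obtained (and is almost certainly what~\cite{HryPro:2012} does): diagonalize the principal part $J\partial_x+\partial_y J$ to get two transport operators along the characteristics $x\pm y=\mathrm{const}$, use the diagonal condition~\eqref{eq:K_b.c_1} and the edge condition~\eqref{eq:K_b.c_2} as Cauchy data, rewrite the problem as a Volterra integral equation $K=K_0+\mathcal{T}K$ on the triangle $\Omega$, and invert $I-\mathcal{T}$ by the Neumann series using the $1/n!$ gain from iterated integration. Your remark about handling $L_2$ (rather than $L_\infty$) potentials via Cauchy--Schwarz in the iterates, together with a smooth-approximation/density argument to make sense of the mild solution and the boundary traces, is exactly the additional care the low regularity forces. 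For the purposes of the present paper, simply citing~\cite{HryPro:2012} is what the authors do; your final sentence correctly anticipates this.
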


Throughout the rest of the paper, we shall assume that the
matrix-valued potentials $P$ and $Q$ are Hermitian, i.e.\ that
$P^*(x) = P(x)$ and $Q^*(x) = Q(x)$ a.e.\ on $[0,1]$. Then the
corresponding Dirac operators~$\sD_j(P)$ and $\sD_j(Q)$,~$j=1,2$,
are self-adjoint and have simple discrete spectra. We denote
by~$\bmu(P)$  (resp. by $\bmu(Q)$) the spectrum of~$\sD_1(P)$
(resp. of~$\sD_1(Q)$) and by~$\bla(P)$ (resp. by~$\bla(Q)$) the
spectrum of~$\sD_2(P)$ (resp. of~$\sD_2(Q)$). The sets $\bmu(P)$
and $\bla(P)$ interlace and their elements can be labelled by
$n\in\bZ$ so that~$\mu_n = \pi \left(n-\tfrac12\right) +
\tfrac12\int_0^1\tr P + \mathrm{o(1)}$ and~$\la_n = \pi n +
\tfrac12\int_0^1\tr P + \mathrm{o(1)}$ as
$|n|\to\infty$~(see~\cite{LevSar:1991}); the spectra $\bmu(Q)$ and
$\bla(Q)$ have similar properties.

 Also,
$\sX=\sX(P,Q)$ will stand for the transformation operator of the
form~\eqref{eq:Tr.op} for the differential expressions~$\ell(P)$
and $\ell(Q)$ on the domain~$\cD_0$, with $R$ given
by~\eqref{eq:Dir.R} and \eqref{eq:Dir.theta}. We shall write
$\sX=\sR + \sK$, where $\sR\bu(x):=R(x)\bu(x)$ is the operator of
multiplication by $R$ and
\[
    \sK\bu(x):= \int_0^x K(x,s)\bu(s)\,ds
\]
is the corresponding integral operator. We observe that for
Hermitian~$P$ and $Q$ the functions~$i \theta_1$ and $\theta_2$
are real valued; in particular, the operator~$\sR$ is unitary.

\begin{remark}\label{rem:Dir.X}
Let us note that if $u\in \cD_0$, then the relation
\[
    \bigl(\ell(P)-\lambda\bigr)\bu = \mathbf{f}
\]
holds if and only if for $\bv:=\sX\bu$ and $\bg:=\sX\mathbf{f}$
one gets
\[
    \bigl(\ell(Q) - \lambda\bigr)\bv = \bg.
\]
\end{remark}

\begin{theorem}
\label{thm:sp.data.coinc.crit} Assume that the matrix
potentials~$P$ and~$Q$ are Hermitian. Then the spectra of the
operators~$\sD_j(P)$ and~$\sD_j(Q)$,~$j=1,2$, coincide, i.e.\
$\bmu(P)=\bmu(Q)$ and~$\bla(P)=\bla(Q)$, if and only if the
transformation operator~$\sX(P,Q)$ for $\ell(P)$ and $\ell(Q)$ on
the domain~$\cD_0$ only contains the unitary part $\sR$ (i.e.\
$\sK=0$) and~$\theta_2(1)=\pi n$ for some $n\in\bZ$.
\end{theorem}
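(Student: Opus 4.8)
The plan is to prove both implications of the equivalence, exploiting the intertwining relation $\sX\ell(P)=\ell(Q)\sX$ on $\cD_0$ together with Remark~\ref{rem:Dir.X}, which transfers resolvent information between the two Dirac operators. I would first establish the easier direction: suppose $\sK=0$ and $\theta_2(1)=\pi n$. Then $\sX=\sR$ is the unitary multiplication operator by $R(x)=e^{\theta_1(x)I+\theta_2(x)J}$. The key observation is that $\sR$ maps $\dom\sD_j(P)$ onto $\dom\sD_j(Q)$: the boundary condition $u_2(0)=0$ is preserved because $R(0)=I$, while at $x=1$ one must check that $R(1)$ respects the condition $u_j(1)=0$. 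Since $R(1)=e^{\theta_1(1)}\bigl(\cos\theta_2(1)I+\sin\theta_2(1)J\bigr)$ and $\theta_2(1)=\pi n$ forces $\sin\theta_2(1)=0$, the matrix $R(1)$ is a nonzero scalar multiple of $I$, hence diagonal, and so it preserves both $u_1(1)=0$ and $u_2(1)=0$. Combined with $\sR\ell(P)\bu=\ell(Q)\sR\bu$, this shows $\sR$ is a unitary equivalence of $\sD_j(P)$ and $\sD_j(Q)$ for $j=1,2$, whence the spectra coincide.

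For the converse I would argue contrapositively and use the spectral asymptotics quoted just before the theorem. Assume $\bmu(P)=\bmu(Q)$ and $\bla(P)=\bla(Q)$. From the asymptotics $\mu_n=\pi(n-\tfrac12)+\tfrac12\int_0^1\tr P+\mathrm{o}(1)$ and the analogous formula for $Q$, equality of the two spectra forces $\int_0^1\tr P=\int_0^1\tr Q$, and therefore $\theta_2(1)=\tfrac12\int_0^1\tr(Q-P)=0$, which in particular is an integer multiple of $\pi$. This disposes of the condition on $\theta_2(1)$ and is the cheap part of the converse. The substantial content is to deduce $\sK=0$, i.e.\ that the transformation operator reduces to its unitary part.

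To obtain $\sK=0$ I would compare resolvents. Fix a non-real $\la$ so that $\la\in\rho(\sD_j(P))=\rho(\sD_j(Q))$ (the resolvent sets agree once the spectra do). Using Remark~\ref{rem:Dir.X}, the intertwining passes to the resolvents on the appropriate domains, yielding a relation of the form $\sX(\sD_j(P)-\la)^{-1}=(\sD_j(Q)-\la)^{-1}\sX$ after accounting for how $\sX$ interacts with the boundary conditions defining $\sD_j$. Since $\sR$ already intertwines the operators and is unitary, subtracting the $\sR$-part isolates $\sK$ and exhibits $\sK$ as an operator intertwining two boundedly invertible operators in a way that forces it to vanish; concretely, one shows that the Volterra-type kernel $K$ must satisfy a homogeneous version of \eqref{eq:Dir.K}--\eqref{eq:K_b.c_2} whose only $L_2(\Omega,\mathcal{M}_2)$ solution is $K\equiv0$ by the uniqueness in Theorem~\ref{thm:K.exist}. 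The main obstacle, and the step I would spend the most care on, is precisely this: translating the coincidence of \emph{both} pairs of spectra (Dirichlet-type and mixed-type, i.e.\ $j=1$ and $j=2$) into enough constraints to kill the integral part $\sK$. Coincidence of a single spectrum is generally not enough to force $\sK=0$; it is the simultaneous agreement on both boundary conditions, together with the Volterra (lower-triangular) structure of $\sK$ and the uniqueness theorem, that rigidifies the transformation operator down to its unitary factor. I would therefore frame the argument so that the two spectral conditions pin down the boundary behaviour of $K$ at both ends and invoke uniqueness to conclude.
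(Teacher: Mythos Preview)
Your sufficiency argument is essentially the paper's: $R(0)=I$ and $\theta_2(1)\in\pi\bZ$ make $R(1)$ a nonzero scalar multiple of $I$, so $\sR$ is a unitary equivalence between $\sD_j(P)$ and $\sD_j(Q)$ for $j=1,2$.

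In the necessity direction, your derivation of the condition on $\theta_2(1)$ is fine (strictly speaking the asymptotics only force $\tfrac12\int_0^1\tr(Q-P)\in\pi\bZ$, since equality of the spectra as \emph{sets} determines the shift only modulo $\pi$; you should not conclude $\theta_2(1)=0$ but only $\theta_2(1)=\pi n$, as the paper does). The real problem is your argument for $\sK=0$.

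You write ``since $\sR$ already intertwines the operators\ldots\ subtracting the $\sR$-part isolates $\sK$''. But $\sR$ alone does \emph{not} in general intertwine $\ell(P)$ and $\ell(Q)$: by Theorem~\ref{thm:Dir.tr-op} the kernel $K$ must satisfy the boundary relation~\eqref{eq:K_b.c_1}, whose right-hand side $JR'+QR-RP$ is precisely the obstruction to $\sR$ being a transformation operator on its own. So you cannot subtract $\sR$ and get an intertwining relation for $\sK$. Likewise, your appeal to the uniqueness in Theorem~\ref{thm:K.exist} is circular: that theorem asserts uniqueness of the solution to the \emph{inhomogeneous} system \eqref{eq:Dir.K}--\eqref{eq:K_b.c_2}, and $K$ is by definition that solution. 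To conclude $K\equiv0$ from uniqueness you would first need to show the right-hand side of~\eqref{eq:K_b.c_1} vanishes, i.e.\ that $\sR$ alone intertwines --- which is exactly what you are trying to prove. The resolvent identity you propose also does not follow, because $\sX$ intertwines $\ell(P)$ and $\ell(Q)$ only on $\cD_0$ (the condition at $x=0$), and there is no reason for the Volterra part $\sK$ to respect the boundary condition at $x=1$ defining $\sD_j$.

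The missing idea is that coincidence of the two spectra forces coincidence of a \emph{third} set of spectral data, namely the norming constants for $\sD_2$. The paper invokes a result from~\cite{AlbHryMk:2005:RJMP} stating that $\bla(P)$ and $\bmu(P)$ together determine the norming constants of $\sD_2(P)$; hence if $\bla(P)=\bla(Q)$ and $\bmu(P)=\bmu(Q)$ the norming constants agree as well. It then cites Lemma~4.4 of~\cite{HryPro:2012}, which says that coincidence of the Dirichlet spectrum \emph{and} the norming constants forces $\sK=0$. This two-step reduction (two spectra $\Rightarrow$ norming constants $\Rightarrow$ $\sK=0$) is the substantive content you are missing; the purely operator-theoretic manipulation you sketch does not supply it.
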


\begin{proof}
\emph{Sufficiency}.  By assumption, the transformation
operator~$\sX(P,Q)$ between~$\ell(P)$ and~$\ell(Q)$ only contains
the multiplication operator~$\sR$. Denote
by~$\widetilde{\sD}_j(P)$,~$j=1,2$, the operators related
to~$\sD_j(Q)$ by
\[
    \widetilde{\sD}_j(P)=\sR^{-1}\sD_j(Q)\sR,\; j=1,2.
\]
Then~$\widetilde{\sD}_j(P)$ are unitarily equivalent to~$\sD_j(Q)$
and so $\sigma(\widetilde{\sD}_j(P))=\sigma(\sD_j(Q))$. By
Remark~\ref{rem:Dir.X}, $\widetilde{\sD}_j(P)$ are Dirac operators
acting via~$\widetilde{\sD}_j(P)\bu=\ell(P)\bu$  on  the domains
which consist of those~$\bu\in L_2((0,1),\bC^2)$ for
which~$\sR\bu\in\dom \sD_j(Q)$.
 Recall that~$R(0)=I$; also since~$\theta_2(1)=\pi n$ for
some~$n\in \bZ$, we have that~$R(1)$ is a multiple of the identity
matrix~$I$. Therefore these~$\bu$ satisfy the same boundary
conditions as~$\sR \bu$.
Thus~$\dom\widetilde{\sD}_j(P)=\dom\sD_j(Q)=\dom\sD_j(P)$. This
means that~$\sD_j(P)=\widetilde{\sD}_j(P)$, and
so~$\bmu(P)=\bmu(Q)$ and~$\bla(P)=\bla(Q)$.

\emph{Necessity}. Firstly note that as the spectra of~$\sD_j(P)$
and~$\sD_j(Q)$, $j=1,2$, coincide, the asymptotics of the
eigenvalues also coincide thus yielding~$\theta_2(1)=\pi n$ for
some~$n\in \bZ$. To complete the proof it remains to show
that~$\sK=0$.

Recall that the norming constant corresponding to an
eigenvalue~$\la$ of the operator~$\sD_2(P)$ is defined
as~$\|\bu\|^2$, where~$\bu=(u_1,u_2)^\mathrm{t}$ is the
eigenfunction of~$\sD_2(P)$ for~$\la$ normalized by the initial
conditions $u_1(0)=1$ and $u_2(0)=0$~\cite{LevSar:1991}. It
follows from~\cite{AlbHryMk:2005:RJMP} that the norming constants
for~$\sD_2(P)$ are uniquely determined by the spectra~$\bla(P)$
and~$\bmu(P)$ (although only AKNS potentials were treated
in~\cite{AlbHryMk:2005:RJMP}, the arguments therein are valid for
all self-adjoint Dirac operators). Thus from the statement of the
theorem we conclude that the sets of norming constants
for~$\sD_2(P)$ and~$\sD_2(Q)$ coincide. By Lemma 4.4.
of~\cite{HryPro:2012} this gives that $\sK=0$.
\end{proof}

Alternatively, to establish the necessity part of the above
theorem, one can use arguments analogous to those
in~\cite{CoxKno:1996}, where a similar statement but under
different assumptions on~$P$ and~$Q$ was proved.


\section{Reconstruction of the pencils}\label{sec:Dir-rec}


In this section, we study the problem of reconstructing the
potentials~$p$ and~$r$ of the corresponding operator pencils
$T_j({p,r})$,~$j=1,2$, from their spectra. Our aim is to prove
Theorem~\ref{thm:pre.main}, i.e.\ that given an arbitrary element
$(\bla,\bmu)$ of $SD$, there exist unique $p$ and $r$ such
that~$\bmu$ is the spectrum of the operator pencil~$T_1(p,r)$
and~$\bla$ is that of~$T_2(p,r)$.

Fix therefore an arbitrary pair~$(\bla,\bmu)$ in SD. In what
follows,~$\la_n$,~$n\in\bZ^*$, and~$\mu_n$,~$n\in\bZ$, will stand
respectively for elements of~$\bla$ and~$\bmu$. Note that the
enumeration of~$\mu_n$ and~$\la_n$ fixes the shift~$h$ in their
asymptotics~\eqref{eq:pre.asy}. We set~$\mu_*$ to be the middle
point of the interval~$(\mu_0,\mu_1)$,
i.e.~$\mu_*:=(\mu_0+\mu_1)/2$, and augment the
sequence~$(\la_n)_{n\in\bZ^*}$ with an element~$\la_0=\mu_*$;
denote obtained sequence by~$\bla^*$.

Recall now some facts from the inverse spectral theory for Dirac
operators, which we shall use in our procedure,
see~\cite{AlbHryMk:2005:RJMP,GasDza:1966,LevSar:1991}. Consider
the set~$\cQ_0$ of~$2\times2$ matrix-valued functions of the AKNS
normal form, namely
\begin{equation}\label{eq:ex.Q0}
    \cQ_0:=\left\{ Q_0=
    \begin{pmatrix}
        q_1 & q_2 \\ q_2 & -q_1
    \end{pmatrix} \mid q_j \in L_{2,\bR}(0,1)\right\}.
\end{equation}
It is known that the operators~$\sD_1(Q_0)$ and~$\sD_2(Q_0)$ with
potential~$Q_0$ from~$\cQ_0$ are self-adjoint and have simple
discrete spectra. Their eigenvalues can be enumerated in
increasing order as $\la_n(Q_0)$ and~$\mu_n(Q_0)$, $n\in\bZ$,
respectively so that they satisfy the interlacing condition
\[
    \mu_n(Q_0)<\la_n(Q_0)<\mu_{n+1}(Q_0)
\]
and obey the asymptotics
\begin{align*}
\la_n(Q_0)& = \pi n + \tilde \la_n(Q_0),\\
\mu_n(Q_0)& = \pi \left(n-\tfrac{1}{2}\right)+ \tilde \mu_n(Q_0)
\end{align*}
with $\ell_2(\bZ)$-sequences~$(\tilde \la_n(Q_0))$ and~$(\tilde
\mu_n(Q_0))$.

It is also known (see~\cite{AlbHryMk:2005:RJMP}) that for two
sequences~$(\mu_n)$ and~$(\la_n)$ having the above properties
there exists a unique potential~$Q_0$ from~$\cQ_0$ such that the
spectrum of~$\sD_1(Q_0)$ is~$(\mu_n)$ and that of~$\sD_2(Q_0)$
is~$(\la_n)$.

The above sequences~$\bmu$ and~$\bla^*$ differ from the spectra
for Dirac operators with potentials of normal AKNS form only by
the shift~$h$ in their asymptotics, see~\eqref{eq:pre.asy}. For
$h\in\bR$, we denote by
\[
    \cQ_h:= \{Q_0+hI \mid Q_0 \in \cQ_0\}
\]
the set of $h$-\emph{shifted} AKNS matrix potentials; then the
following result holds true.

\begin{proposition}\label{pro:exist.Q}
For an arbitrary pair~$(\bla,\bmu)$ from~$SD$
take~$\mu_*:=(\mu_0+\mu_1)/2$  and denote by $\bla^*$ the
augmentation of $\bla$ with $\la_0=\mu_*$. Then there exists a
unique potential $Q\in\cQ_h$ such that $\bmu$ is the spectrum of
the operator~$\sD_1(Q)$ and~$\bla^*$ is that of~$\sD_2(Q)$.
\end{proposition}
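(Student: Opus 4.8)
The plan is to reduce Proposition~\ref{pro:exist.Q} to the already-quoted existence and uniqueness result for AKNS operators by simply absorbing the shift~$h$. The key observation is that the shift by~$hI$ in the potential corresponds to a trivial spectral shift for the Dirac operators~$\sD_j$, because~$I$ commutes with~$J$ and with the boundary-condition structure. Concretely, if~$Q_0\in\cQ_0$ and~$Q=Q_0+hI\in\cQ_h$, then~$\ell(Q)\bu = J\bu' + Q_0\bu + h\bu = \ell(Q_0)\bu + h\bu$ on~$\cD_0$. Since the domains~$\dom\sD_j(Q)$ and~$\dom\sD_j(Q_0)$ coincide (they are determined only by the boundary conditions~$u_2(0)=u_j(1)=0$, which do not see the potential), we obtain the operator identity
\[
    \sD_j(Q) = \sD_j(Q_0) + hI, \qquad j=1,2,
\]
and therefore
\[
    \sigma(\sD_j(Q)) = \sigma(\sD_j(Q_0)) + h.
\]

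First I would make this shift explicit and record that the map~$Q_0\mapsto Q_0+hI$ is a bijection from~$\cQ_0$ onto~$\cQ_h$. Next I would translate the target spectra back by~$h$: given~$(\bla,\bmu)\in SD$, set
\[
    \hat\mu_n := \mu_n - h, \quad n\in\bZ, \qquad
    \hat\la_n := \la_n - h, \quad n\in\bZ,
\]
where the augmented sequence~$\bla^*$ supplies~$\hat\la_0=\mu_*-h$. Using the asymptotics~\eqref{eq:pre.asy} and the definition of~$SD$, I would verify that the shifted sequences~$(\hat\mu_n)$ and~$(\hat\la_n)$ satisfy exactly the hypotheses required of AKNS spectra: the asymptotics become~$\hat\la_n=\pi n+\tilde\la_n$ and~$\hat\mu_n=\pi(n-\tfrac12)+\tilde\mu_n$ with the same~$\ell_2$-remainders, matching the displayed asymptotics for~$\la_n(Q_0)$ and~$\mu_n(Q_0)$. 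For the interlacing~$\hat\mu_n<\hat\la_n<\hat\mu_{n+1}$ one transfers the almost-interlacing~\eqref{eq:pre.aic} for indices~$k\in\bZ^*$ directly, while the two missing inequalities at~$n=0$, namely~$\hat\mu_0<\hat\la_0<\hat\mu_1$, hold by the choice~$\la_0=\mu_*=(\mu_0+\mu_1)/2$, which places~$\mu_*$ strictly between~$\mu_0$ and~$\mu_1$.

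With the hypotheses verified, I would invoke the cited AKNS result: there exists a \emph{unique}~$Q_0\in\cQ_0$ whose operators~$\sD_1(Q_0)$ and~$\sD_2(Q_0)$ have spectra~$(\hat\mu_n)$ and~$(\hat\la_n)$ respectively. Setting~$Q:=Q_0+hI\in\cQ_h$ and applying the spectral-shift identity above then yields~$\sigma(\sD_1(Q))=\bmu$ and~$\sigma(\sD_2(Q))=\bla^*$, giving existence. For uniqueness, suppose~$\widetilde Q=\widetilde Q_0+hI\in\cQ_h$ also realizes these spectra; shifting back shows that~$\widetilde Q_0$ realizes the same AKNS spectra~$(\hat\mu_n),(\hat\la_n)$, so the AKNS uniqueness forces~$\widetilde Q_0=Q_0$ and hence~$\widetilde Q=Q$.

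The argument is essentially routine once the shift is isolated; the only point requiring genuine care is the bookkeeping at the index~$n=0$. The AKNS spectra are indexed by all of~$\bZ$ and interlace throughout, whereas~$\bla$ is indexed only by~$\bZ^*$ and lacks a~$\la_0$. The main (modest) obstacle is thus to confirm that the augmentation~$\la_0=\mu_*$ produces an element of the AKNS spectral class after the shift — i.e.\ that the inserted eigenvalue fits consistently into both the interlacing pattern and the~$\ell_2$-asymptotics. The interlacing at~$n=0$ is immediate from~$\mu_*$ being the midpoint of~$(\mu_0,\mu_1)$; the asymptotics are unaffected by a single inserted term, so the shifted augmented sequence indeed lies in the admissible AKNS class, and the reduction goes through.
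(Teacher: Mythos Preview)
Your proposal is correct and matches the paper's approach exactly: the paper does not give a separate proof of Proposition~\ref{pro:exist.Q} but presents it as an immediate consequence of the cited AKNS existence/uniqueness result together with the observation that $\cQ_h=\cQ_0+hI$ corresponds to a uniform spectral shift by~$h$. You have simply spelled out the details of that reduction, including the routine verification that the augmented, shifted sequences satisfy the AKNS interlacing and asymptotic hypotheses.
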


For an arbitrary $P\in L_2\bigl((0,1),\mathcal{M}_2\bigr)$, we
introduce the set
\[
    \Iso(P):=\{\tilde P\in L_2\bigl((0,1),\mathcal{M}_2\bigr) \mid
    \bmu(\tilde P)=\bmu(P),\; \bla(\tilde P)=\bla(P)\}
\]
of \emph{isospectral potentials} and for~$\mu\in\bR$ denote
by~$\cP_{\mu}$ the set of all potentials of the
form~\eqref{eq:Dir.P}, i.e.\
\[
    \mathcal{P}_\mu:=\{P=(p_{ij})_{i,j=1}^2 \mid
        p_{ij}\in L_{2,\mathbb{R}}(0,1),
        \ p_{11}=\mu,\ p_{12}=p_{21}\}.
\]

Now we are going to prove that for every~$\mu\in\bR$
and~$Q\in\cQ_h$ there exists a unique potential~$P$
from~$\Iso(Q)\cap \mathcal{P}_\mu$. We start with the following
lemma.

\begin{lemma}\label{lem:QRP_exi_uni}
Suppose that~$Q \in \cQ_h$,~i.e.\
\[
    Q=\left(%
    \begin{array}{cc}
    q_1+h & q_2 \\
    q_2 & -q_1+h \\
    \end{array}%
    \right).
\]
Then for every fixed~$\mu\in\bR$ there exists a unique
potential~$P\in\cP_\mu$ such that, with $\mathcal{R}$ being the
operator of multiplication by the matrix-valued function $R$
of~\eqref{eq:Dir.R}--\eqref{eq:Dir.theta}, one has
\begin{equation}\label{eq:Rec.RPQ}
    \sR\ell(P)=\ell(Q)\sR.
\end{equation}
\end{lemma}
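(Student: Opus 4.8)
The plan is to read off condition~\eqref{eq:Rec.RPQ} entrywise and show it is equivalent to a system of first-order scalar equations whose unique solvability is transparent. Writing $R(x)=e^{\theta_1 I + \theta_2 J}$ as in~\eqref{eq:Dir.R} and substituting into $\sR\ell(P)=\ell(Q)\sR$, I would use that $\sR$ is multiplication by $R$ while $\ell(P)=J\frac{d}{dx}+P$, so that for $\bu\in\cD_0$,
\[
    \sR\ell(P)\bu = RJ\bu' + RP\bu,
    \qquad
    \ell(Q)\sR\bu = J(R\bu)' + QR\bu = JR'\bu + JR\bu' + QR\bu.
\]
Hence~\eqref{eq:Rec.RPQ} holds for all such $\bu$ if and only if the first-order terms and the zeroth-order terms match separately, i.e.
\begin{equation}\label{eq:plan.match}
    RJ = JR, \qquad RP = JR' + QR.
\end{equation}

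The first identity in~\eqref{eq:plan.match} is automatic: since $R=e^{\theta_1 I+\theta_2 J}$ is a polynomial in $I$ and $J$, it commutes with $J$, so $RJ=JR$ holds with no constraint. Thus the whole content of~\eqref{eq:Rec.RPQ} is the single matrix equation $RP = JR' + QR$, equivalently
\[
    P = R^{-1}JR' + R^{-1}QR.
\]
First I would observe that $\theta_1,\theta_2$ in~\eqref{eq:Dir.theta} depend on $P$ only through $\tr P$ and $\tr(JP)$, which are, however, not yet known; so $R$ itself is determined by $P$, and the above is not an explicit formula but a genuine equation for $P\in\cP_\mu$. The strategy is therefore to impose the constraints defining $\cP_\mu$ (namely $p_{11}=\mu$ and $p_{12}=p_{21}$) and show they pin down $\theta_1,\theta_2$, hence $R$, hence $P$, uniquely. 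Concretely, taking traces in $P=R^{-1}JR'+R^{-1}QR$ and using $R^{-1}JR'=\theta_1' J+\theta_2' I$ (a direct computation from~\eqref{eq:Dir.R}), together with $\tr R^{-1}QR=\tr Q$ and $\tr(J R^{-1}QR)=\tr(JQ)$, yields $\tr P = 2\theta_2' + \tr Q$ and $\tr(JP)=2\theta_1' + \tr(JQ)$. Comparing with~\eqref{eq:Dir.theta}, which say $\theta_1'=\tfrac12\tr[J(Q-P)]$ and $\theta_2'=\tfrac12\tr(Q-P)$, one finds these are consistent identities rather than new constraints, so $\theta_1,\theta_2$ are \emph{not} fixed by trace considerations alone.

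The genuine constraints come instead from the two scalar conditions $p_{11}=\mu$ and $p_{12}=p_{21}$. Writing $R^{-1}JR'=\theta_2'I+\theta_1'J$ and conjugating $Q$ by the rotation-type matrix $R$, I would expand $P=(p_{ij})$ explicitly and extract: (a) the symmetry condition $p_{12}=p_{21}$, which, since $R^{-1}JR'$ is symmetric exactly when $\theta_1'=0$ and $R^{-1}QR$ is symmetric because $Q$ is and conjugation by the unitary $R$ preserves symmetry, forces $\theta_1'\equiv 0$, whence $\theta_1\equiv0$ (using $\theta_1(0)=0$); and (b) the normalization $p_{11}=\mu$, which becomes a first-order ODE for $\theta_2$ of the form $\theta_2' = \mu - (\text{a bounded function of }\theta_2,\,q_1,\,q_2,\,h)$, with initial value $\theta_2(0)=0$. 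The existence and uniqueness of $P$ then reduces to the existence and uniqueness of a global solution $\theta_2$ on $[0,1]$ to this scalar ODE with $L_2$ coefficients. I expect the main obstacle to be exactly this last point: the right-hand side of the $\theta_2$-equation involves the entries of $R^{-1}QR$, which are trigonometric in $\theta_2$ and hence nonlinear, so I must verify that the nonlinearity is Lipschitz (it is, being built from $\sin,\cos$ of $\theta_2$ with $L_2$ coefficients) and that solutions do not blow up on $[0,1]$ (which follows from the global Lipschitz bound, giving an a priori linear growth estimate via Gr\"onwall). Once $\theta_2$, and hence $R$, is determined uniquely, the formula $P=\theta_2'I+R^{-1}QR$ produces the unique $P\in\cP_\mu$ satisfying~\eqref{eq:Rec.RPQ}, with the required $L_{2,\bR}(0,1)$ regularity of the entries inherited from $q_1,q_2$ and the absolute continuity of $\theta_2$.
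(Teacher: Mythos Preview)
Your approach is essentially the paper's own: rewrite \eqref{eq:Rec.RPQ} as $P=R^{-1}JR'+R^{-1}QR$, then impose $p_{12}=p_{21}$ to force $\theta_1\equiv0$ and $p_{11}=\mu$ to obtain a scalar first-order ODE for $\theta_2$ with $\theta_2(0)=0$, whose unique global solution yields the unique $P\in\cP_\mu$. The only discrepancy is a recurring sign slip: since $J^2=-I$, one has $R^{-1}JR'=-\theta_2'I+\theta_1'J$ (not $+\theta_2'I$), so $\tr P=-2\theta_2'+\tr Q$ and the final formula is $P=-\theta_2'I+R^{-1}QR$; with this correction the ODE reads $\theta_2'=h-\mu+q_1\cos2\theta_2-q_2\sin2\theta_2$, exactly as in the paper, and your Lipschitz/Gr\"onwall argument for global existence fills in the detail the paper's ``Clearly'' skips over.
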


\begin{proof}
 First we note that the matrix~$R$
given by~\eqref{eq:Dir.R} commutes with~$J$, whence
\[
  R^{-1}\Bigl(J\frac{d}{dx}+Q\Bigr)R
        =J\frac{d}{dx}+R^{-1}JR'+R^{-1}QR.
\]
Thus the relation~\eqref{eq:Rec.RPQ} requires that
\begin{equation}\label{eq:Rec.P}
    \begin{aligned}
     P  & = R^{-1}JR'+R^{-1}QR \\
        & = (h-\theta_2')I+\theta_1'J
          + \left(
            \begin{array}{cc}
                q_{1}\cos2\theta_2 - q_{2}\sin 2\theta_2
              & q_{1}\sin2\theta_2 + q_{2}\cos 2\theta_2 \\
                q_{1}\sin 2\theta_2 + q_{2}\cos2\theta_2
              &-q_{1}\cos2\theta_2 + q_{2}\sin 2\theta_2
            \end{array}%
            \right).
    \end{aligned}
\end{equation}
Now we fix an arbitrary~$\mu\in\bR$ and observe that the
potential~$P$ of~$\eqref{eq:Rec.P}$ belongs to~$\cP_\mu$ if and
only if~$\theta_1(x)$ is identically zero and the following
equality holds:
\begin{equation}\label{eq:Rec.theta}
    -\theta_2'+q_{1}\cos2\theta_2-q_{2}\sin 2\theta_2+h=\mu.
\end{equation}
Clearly, there exists a unique solution of the above equation
satisfying the initial condition
\begin{equation}\label{eq:Rec.theta.ic}
\theta_2(0)=0.
\end{equation}
This solution and~$\theta_1\equiv
 0$ verify the relation~\eqref{eq:Dir.theta} with the given~$Q$
 and with~$P$ of~\eqref{eq:Rec.P}. The potential~$P$ is
 explicitly given by~\eqref{eq:Rec.P} with~$\theta_1\equiv 0$ and~$\theta_2$ as above and by construction it belongs
 to~$\cP_\mu$ and satisfies~\eqref{eq:Rec.RPQ}. The proof is complete.
\end{proof}

The next result states existence and uniqueness of the
potential~$P$ from~$\cP_\mu$ belonging to~$\Iso(Q)$.

\begin{theorem}\label{th:exist_uniq_P}
Let~$\mu,h \in\bR$ and assume that~$Q \in \cQ_h$ is such that
$\la=\mu$ is an eigenvalue of the Dirac operator~$\sD_2(Q)$. Then
there exists a unique~$P\in\mathcal{P}_\mu$ belonging
to~$\Iso(Q)$, i.e.\ \(
    \Iso(Q)\cap\mathcal{P}_\mu=\{P\}.
\)
\end{theorem}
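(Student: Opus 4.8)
The plan relies on the transformation-operator framework built in Sections~\ref{sec:transf} and on Lemma~\ref{lem:QRP_exi_uni}. The key observation is that Lemma~\ref{lem:QRP_exi_uni} already produces, for the given $\mu$ and $Q\in\cQ_h$, a \emph{canonical candidate} $P\in\cP_\mu$ satisfying the intertwining relation $\sR\ell(P)=\ell(Q)\sR$ via a purely multiplicative transformation $\sR$ (with $\theta_1\equiv0$). So the bulk of the work is to show (i) this $P$ actually lies in $\Iso(Q)$, and (ii) it is the \emph{only} element of $\cP_\mu$ that does so. For existence, I would first verify that the multiplicative operator $\sR$ coming from Lemma~\ref{lem:QRP_exi_uni} satisfies the hypotheses of the sufficiency direction of Theorem~\ref{thm:sp.data.coinc.crit}: namely that $\sK=0$ (which holds automatically here since $\sX=\sR$ by construction) and that $\theta_2(1)=\pi n$ for some $n\in\bZ$. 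The condition $\theta_2(1)\in\pi\bZ$ is where the hypothesis ``$\la=\mu$ is an eigenvalue of $\sD_2(Q)$'' enters, and pinning it down is the main obstacle (see below). Granting it, Theorem~\ref{thm:sp.data.coinc.crit} gives $\bmu(P)=\bmu(Q)$ and $\bla(P)=\bla(Q)$, i.e.\ $P\in\Iso(Q)$.

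The heart of the argument is therefore to extract $\theta_2(1)\in\pi\bZ$ from the eigenvalue hypothesis. The plan is to connect the phase $\theta_2(1)$ to the spectral shift. Recall from the asymptotics recorded in Section~\ref{sec:transf} that the eigenvalues of $\sD_2$ grow like $\pi n+\tfrac12\int_0^1\tr$; since $P\in\cP_\mu$ and $Q\in\cQ_h$ have, respectively, traces $2\mu-\,\cdots$ and $2h$, comparing the leading asymptotics of $\bla(P)$ and $\bla(Q)$ forces a compatibility of the mean traces and hence restricts $\theta_2(1)$. More directly, I would argue that $\la=\mu$ being an eigenvalue of $\sD_2(Q)$ means there is a nontrivial $\bv\in\dom\sD_2(Q)$ with $(\ell(Q)-\mu)\bv=0$ and $v_2(0)=v_2(1)=0$; transporting this back through $\sR^{-1}$ gives $\bu=\sR^{-1}\bv$ with $(\ell(P)-\mu)\bu=0$, and the boundary condition at $x=1$ survives precisely when $R(1)$ is a scalar multiple of $I$, i.e.\ when $\theta_2(1)\in\pi\bZ$. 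Making this boundary-value bookkeeping rigorous—tracking how $v_2(1)=0$ translates under the rotation $R(1)=e^{\theta_2(1)J}$ into a condition on $\theta_2(1)$ given that $\mu$ is genuinely an eigenvalue—is the step I expect to require the most care, since one must rule out the degenerate possibility that the transported eigenfunction fails the boundary condition.

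For uniqueness, suppose $P_1,P_2\in\cP_\mu\cap\Iso(Q)$. By Lemma~\ref{lem:QRP_exi_uni} applied twice, each $P_i$ is intertwined with $Q$ by a multiplicative $\sR_i$ with $\theta_1\equiv0$; equivalently, the transformation operator $\sX(P_1,P_2)$ between $\ell(P_1)$ and $\ell(P_2)$ is obtained by composing $\sR_1^{-1}$ and $\sR_2$ and is again purely multiplicative. Since $P_1$ and $P_2$ are isospectral, $\bmu(P_1)=\bmu(P_2)$ and $\bla(P_1)=\bla(P_2)$, and the \emph{necessity} direction of Theorem~\ref{thm:sp.data.coinc.crit} forces $\sK=0$ for $\sX(P_1,P_2)$ and the associated $\theta_2^{(12)}(1)\in\pi\bZ$. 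Then the intertwining $\sR_1^{-1}\sR_2\,\ell(P_1)=\ell(P_2)\,\sR_1^{-1}\sR_2$ with both $P_1,P_2\in\cP_\mu$ (so $p_{11}\equiv\mu$ identically for each) feeds back into the normalization~\eqref{eq:Rec.theta}--\eqref{eq:Rec.theta.ic}: the condition $p_{11}=\mu$ together with $\theta_2(0)=0$ pins the phase of the intertwiner uniquely, so the two rotation angles must coincide, giving $\sR_1=\sR_2$ and hence $P_1=P_2$. Thus $\Iso(Q)\cap\cP_\mu$ is the singleton $\{P\}$, as claimed.
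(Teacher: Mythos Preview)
Your overall architecture matches the paper's proof: produce the candidate $P\in\cP_\mu$ via Lemma~\ref{lem:QRP_exi_uni}, invoke Theorem~\ref{thm:sp.data.coinc.crit} in both directions, and reduce existence to showing $\theta_2(1)\in\pi\bZ$. Your uniqueness argument is also correct in spirit, though you make it harder than necessary: once Theorem~\ref{thm:sp.data.coinc.crit} forces the transformation operator $\sX(P,Q)$ to be the multiplicative $\sR$ with $R$ given by~\eqref{eq:Dir.R}--\eqref{eq:Dir.theta}, the uniqueness clause of Lemma~\ref{lem:QRP_exi_uni} applied \emph{once} (not composed) already pins $P$ down, since $\theta_2$ is the unique solution of the initial-value problem~\eqref{eq:Rec.theta}--\eqref{eq:Rec.theta.ic}. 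There is no need to pass through $\sX(P_1,P_2)$.

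The genuine gap is in the existence step, precisely at the place you flag as ``the main obstacle''. You transport the $\sD_2(Q)$-eigenfunction $\bv$ back to $\bu=\sR^{-1}\bv$ and say the boundary condition $v_2(1)=0$ ``survives precisely when $R(1)$ is a scalar multiple of $I$''. But the equation $v_2(1)=-u_1(1)\sin\theta_2(1)+u_2(1)\cos\theta_2(1)=0$ tells you nothing about $\theta_2(1)$ unless you can control $u_1(1)$ and $u_2(1)$ independently. The mechanism you are missing is the reason $P$ was put in $\cP_\mu$ in the first place: since $p_{11}\equiv\mu$, the first row of $\ell(P)\bu=\mu\bu$ reads
\[
u_2'+p_{12}u_2=(\mu-p_{11})u_1=0,
\]
and combined with $u_2(0)=v_2(0)=0$ (because $R(0)=I$) this forces $u_2\equiv0$ on $[0,1]$. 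Now $v_2(1)=-u_1(1)\sin\theta_2(1)$, and $u_1(1)\ne0$ since otherwise the second row would give $u_1\equiv0$ as well and $\bu$ would vanish; hence $\sin\theta_2(1)=0$. This is exactly the paper's argument, and it is the algebraic identity $p_{11}=\mu$ that does the work, not a generic ``boundary-value bookkeeping''. Your asymptotic comparison of $\tfrac12\int_0^1\tr P$ with $\tfrac12\int_0^1\tr Q$ cannot deliver this either: matching leading asymptotics of $\bla(P)$ and $\bla(Q)$ only gives $\theta_2(1)\in\pi\bZ$ modulo an ambiguity you cannot resolve without the eigenfunction argument above.
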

\begin{proof}
Observe first that if~$P\in \Iso(Q)$, then by
Theorem~\ref{thm:sp.data.coinc.crit} the transformation operator
between~$\ell(P)$ and~$\ell(Q)$ on~$\cD_0$ is just the
operator~$\sR$ of multiplication by the matrix-valued function $R$
given by~\eqref{eq:Dir.R}--\eqref{eq:Dir.theta}. If, moreover,~$P$
should belong to~$\mathcal{P}_\mu$, then by
Lemma~\ref{lem:QRP_exi_uni} $P$ should be given
by~\eqref{eq:Rec.P} with~$\theta_1\equiv0$ and~$\theta_2$
solving~\eqref{eq:Rec.theta}--\eqref{eq:Rec.theta.ic}. It remains
to prove that the potential given
by~\eqref{eq:Rec.P}--\eqref{eq:Rec.theta.ic} indeed belongs
to~$\Iso(Q)$. By Theorem~\ref{thm:sp.data.coinc.crit}, to this end
it suffices to show that~$\theta_2(1)=\pi n$ for some~$n\in
\mathbb{Z}$.

Consider the operator~$\widetilde{\sD}_2(P):=\sR^{-1}\sD_2(Q)\sR$.
Clearly, $\widetilde{\sD}_2(P)$ is a Dirac operator acting
via~$\widetilde{\sD}_2(P)\bu=\ell(P) \bu$ on the domain consisting
of those~$\bu$ for which~$\cR\bu$ belongs to~$\dom \sD_2(Q)$. By
 construction,~$\sigma(\widetilde{\sD}_2(P))=\sigma(\sD_2(Q))$ and thus~$\la=\mu$ is in the spectrum of~$\widetilde{\sD}_2(P)$. Denote
by~$\bu_0=(u_1,u_2)^\mathrm{t}$ the eigenfunction
of~$\widetilde{\sD}_2(P)$ corresponding to~$\la=\mu$. Then
$\bv_0=(v_1,v_2)^\mathrm{t}:=\sR\bu_0$ is an eigenfunction
of~$\sD_2(Q)$ corresponding to~$\la$, so that~$v_2(0)=v_2(1)=0$.
The equality~$\ell(P)\bu_0=\mu\bu_0$ yields the
relation~$u_2'-vu_2=0$. Since~$R(0)=I$, we have $u_2(0)=v_2(0)=0$,
which together with the above relation gives that~$u_2$ is
identically zero. Therefore
\[
    \bv_0(1)
        = R(1)\bu_0(1)
        = \binom{u_1(1)\cos\theta_2(1)}{-u_1(1)\sin\theta_2(1)}.
\]
But~$v_2(1)=0$ and~$u_1(1)\ne 0$ giving that~$\theta_2(1)=\pi n$,
$n\in\mathbb{Z}$.
\end{proof}

With these preliminaries at hand we can prove the main results of
the paper.
\begin{proof}[Proof of Theorem~\ref{thm:pre.main}]
\emph{Existence.} Given a pair~$(\bla,\bmu)$ from~$SD$, we
set~$\mu_*:=(\mu_0+\mu_1)/2$ and augment~$\bla$
with~$\la_0:=\mu_*$ to obtain~$\bla^*$. Using
Proposition~\ref{pro:exist.Q}, we find the potential~$Q$
in~$\cQ_h$ such that~$\bmu$ is the spectrum of the
operator~$\sD_1(Q)$ and~$\bla^*$ is that of~$\sD_2(Q)$.
Theorem~\ref{th:exist_uniq_P} gives that for this~$Q$ we can
find~$P\in\cP_{\mu_*}$ belonging to~$\Iso(Q)$, i.e. such
that~$\bmu$ and~$\bla^*$ are respectively the spectra
of~$\sD_1(P)$ and~$\sD_2(P)$. In view of
Lemma~\ref{lem:Dir.spectra} this means that~$\bmu$ and~$\bla$ are
the spectra of~$T_1(p,r)$ and~$T_2(p,r)$ with
\begin{equation}
\label{eq:pq}
 p=\frac{p_{22}+p_{11}}{2},\quad r: =
-p_{12}-\int_x^{1}\!(p_{12}^2-p_{11}p_{22}).
\end{equation}
The almost interlacing of~$\bla$ and~$\bmu$ gives that~$T_1(p,r)$
is hyperbolic (see~\cite{Pro:2011a}), i.e.\ it satisfies
assumption~(A).

 This completes the proof of existence.

\emph{Uniqueness.} Suppose there are two pairs of
potentials~$p$,~$r$ and~$\hat{p}$,~$\hat{r}$ such
that~$\sigma(T_1(p,r))=\sigma(T_1(\hat{p},\hat{r}))=:\bmu$
and~$\sigma(T_2(p,r))=\sigma(T_2(\hat{p},\hat{r}))=:\bla$. Then
(see \cite{Pro:2011a}) for every~$\mu\in(\mu_0,\mu_1)$ the
operators~$T_1(p,r)(\mu)$ and~$T_1(\hat{p},\hat{r})(\mu)$ are
negative, i.e.\ the assumption~(A) holds for operator
pencils~$T_1(p,r)$ and~$T_1(\hat{p},\hat{r})$.

Set~$\mu_*:=(\mu_0+\mu_1)/2$; then the pencils~$T_j(p,r)$
and~$T_j(\hat{p},\hat{r})$,~$j=1,2$, lead to two Dirac operators
with potentials~$P$ and~$\hat{P}$ in~$\cP_{\mu_*}$ as explained in
Section~\ref{sec:Dir}  and to two Dirac operators with
potentials~$Q$ and~$\hat{Q}$ in~$\cQ_h$, with the same~$h$ as
explained at the beginning of this section. By
construction,~$\bmu(Q)=\bmu(\hat{Q})=\bmu$
and~$\bla(Q)=\bla(\hat{Q})=\bla\cup\{\mu_*\}$, whence~$Q=\hat{Q}$
by Proposition~\ref{pro:exist.Q}. By Lemma~\ref{lem:Dir.spectra}
we have that~$\bmu(P)=\bmu(\hat{P})$ and~$\bla(P)=\bla(\hat{P})$
and thus~$P$ and~$\hat{P}$ belong to the same isospectral
set~$\Iso(Q)$. In view of Theorem~\ref{th:exist_uniq_P} this
yields that~$P=\hat{P}$. Therefore,~$\hat{p}=\hat{p}$
and~$\hat{r}=\hat{r}$, which completes the proof.

\end{proof}


\section{Reconstructing algorithm}\label{sec:alg}


To summarize  let us formulate the algorithm of reconstruction of
the operator pencils~$T_1(p,r)$ and $T_2(p,r)$.

Suppose we have a pair of sequences~$(\bla,\bmu)$ from~$SD$. Then
the reconstructing procedure consists of the following steps:

\begin{enumerate}
    \item Take~$\mu_*:=(\mu_1+\mu_2)/2$ and augment the given sequence~$\bla$ with an
    element~$\lambda_0=\mu_*$;
    \item Use the obtained spectral data to construct the Dirac
    operators $\mathcal{D}_1(Q)$ and $\mathcal{D}_2(Q)$ with the potential~$Q$ of the~$h$-shifted AKNS form;
    \item Find the corresponding potential~$P\in\mathcal{P}_{\mu_*}\cap
    \Iso(Q)$ using~\eqref{eq:Rec.P} and~\eqref{eq:Rec.theta};
    \item Compute the potentials~$p$ and~$r$ using formulae~\eqref{eq:pq}.
\end{enumerate}

We conclude this paper with some remarks. Firstly, note that the
described approach can be used to reconstruct energy-dependent
Sturm--Liouville equations subject to other boundary conditions or
under different smoothness assumptions on the potentials, e.g.\ in
the case when~$p$ and~$r$ are from~$L_s(0,1)$ with~$s\ge1$ or
belong to~$W_2^s(0,1)$ with~$s\ge0$,
cf.~\cite{HryMyk:2006:PEMS,SavShk:2006}.

Secondly, the considered method is also applicable to the inverse
problems of reconstructing the quadratic pencil from different
sets of spectral data, for example, the spectrum and the norming
constants~\cite{HryPro:2012} or Hochstadt--Lieberman mixed data
(cf.~\cite{HocLie:1978}). Finally, one can establish an analogue
of the Hochstadt-type perturbation formulas when finitely many
eigenvalues in one spectrum have been changed
(cf.~\cite{Hoch:73}).

\bibliographystyle{abbrv}

\bibliography{My_bibl}

\begin{thebibliography}{10}

\bibitem{AktMee91}
T.~Aktosun and C.~van~der Mee.
\newblock Scattering and inverse scattering for the {$1$}-{D} {S}chr\"odinger
  equation with energy-dependent potentials.
\newblock {\em J. Math. Phys.}, 32(10):2786--2801, 1991.

\bibitem{AGHH}
S.~Albeverio, F.~Gesztesy, R.~{H{\o}egh}-Krohn, and H.~Holden.
\newblock {\em Solvable {M}odels in {Q}uantum {M}echanics. With an appendix by
  Pavel Exner. 2nd revised ed.}
\newblock {Providence, RI: AMS Chelsea Publishing}, 2005.

\bibitem{AlbHryMk:2005:RJMP}
S.~Albeverio, R.~Hryniv, and Y.~Mykytyuk.
\newblock Inverse spectral problems for {D}irac operators with summable
  potentials.
\newblock {\em Russ. J. Math. Phys.}, 12(4):406--423, 2005.

\bibitem{AlbKur:99}
S.~Albeverio and P.~Kurasov.
\newblock {\em {Singular Perturbations of Differential Operators. Solvable
  Schr\"odinger Type Operators.}}
\newblock {Cambridge: Cambridge University Press}, 1999.

\bibitem{CoxKno:1996}
S.~Cox and R.~Knobel.
\newblock An inverse spectral problem for a nonnormal first order differential
  operator.
\newblock {\em Integral Equations Operator Theory}, 25(2):147--162, 1996.

\bibitem{EngNag:2000}
K.-J. Engel and R.~Nagel.
\newblock {\em {One-parameter Semigroups for Linear Evolution Equations}},
  volume 194 of {\em Graduate Texts in Mathematics}.
\newblock Springer-Verlag, New York, 2000.

\bibitem{GasDza:1966}
M.~G. Gasymov and T.~T. D{\v{z}}abiev.
\newblock Solution of the inverse problem by two spectra for the {D}irac
  equation on a finite interval.
\newblock {\em Akad. Nauk Azerba\u\i d\v zan. SSR Dokl.}, 22(7):3--6, 1966.

\bibitem{GasGus81}
M.~G. Gasymov and G.~{\v{S}}. Guse{\u\i}nov.
\newblock Determination of a diffusion operator from spectral data.
\newblock {\em Akad. Nauk Azerba\u\i dzhan. SSR Dokl.}, 37(2):19--23, 1981.

\bibitem{Gus86}
G.~S. Guse{\u\i}nov.
\newblock Inverse spectral problems for a quadratic pencil of
  {S}turm--{L}iouville operators on a finite interval.
\newblock In {\em Spectral theory of operators and its applications, {N}o.\ 7
  ({R}ussian)}, pages 51--101. ``\`Elm'', Baku, 1986.

\bibitem{GusNab07}
I.~M. Guse{\u\i}nov and I.~M. Nabiev.
\newblock An inverse spectral problem for pencils of differential operators.
\newblock {\em Mat. Sb.}, 198(11):47--66, 2007.

\bibitem{Hoch:73}
H.~Hochstadt.
\newblock The inverse {S}turm--{L}iouville problem.
\newblock {\em Comm. Pure Appl. Math.}, 26:715--729, 1973.

\bibitem{HocLie:1978}
H.~Hochstadt and B.~Lieberman.
\newblock An inverse {S}turm--{L}iouville problem with mixed given data.
\newblock {\em SIAM J. Appl. Math.}, 34(4):676--680, 1978.

\bibitem{HryPro:2012p}
R.~Hryniv and N.~Pronska.
\newblock Oscillation results for energy-dependent {S}turm--{L}iouville
  equation.
\newblock {\em In preparation}.

\bibitem{HryPro:2012}
R.~Hryniv and N.~Pronska.
\newblock Inverse spectral problem for energy-dependent {S}turm--{L}iouville
  equation.
\newblock {\em ArXiv:1203.4851v1}, 2012.

\bibitem{HryMyk:2001}
R.~O. Hryniv and Y.~V. Mykytyuk.
\newblock 1-{D} {S}chr\"odinger operators with periodic singular potentials.
\newblock {\em Methods Funct. Anal. Topology}, 7(4):31--42, 2001.

\bibitem{HryMyk:2004}
R.~O. Hryniv and Y.~V. Mykytyuk.
\newblock Inverse spectral problems for {S}turm-{L}iouville operators with
  singular potentials. {II}. {R}econstruction by two spectra.
\newblock In {\em Functional analysis and its applications}, volume 197 of {\em
  North-Holland Math. Stud.}, pages 97--114. Elsevier Sci. B. V., Amsterdam,
  2004.

\bibitem{HryMyk:2006:PEMS}
R.~O. Hryniv and Y.~V. Mykytyuk.
\newblock Inverse spectral problems for {S}turm--{L}iouville operators with
  singular potentials. {IV}. {P}otentials in the {S}obolev space scale.
\newblock {\em Proc. Edinb. Math. Soc. (2)}, 49(2):309--329, 2006.

\bibitem{Jau72}
M.~Jaulent.
\newblock On an inverse scattering problem with an energy-dependent potential.
\newblock {\em Ann. Inst. H. Poincar\'e Sect. A (N.S.)}, 17:363--378, 1972.

\bibitem{JauJea72}
M.~Jaulent and C.~Jean.
\newblock The inverse {$s$}-wave scattering problem for a class of potentials
  depending on energy.
\newblock {\em Comm. Math. Phys.}, 28:177--220, 1972.

\bibitem{JauJea761}
M.~Jaulent and C.~Jean.
\newblock The inverse problem for the one-dimensional {S}chr\"odinger equation
  with an energy-dependent potential. {I}.
\newblock {\em Ann. Inst. H. Poincar\'e Sect. A (N.S.)}, 25(2):105--118, 1976.

\bibitem{JauJea762}
M.~Jaulent and C.~Jean.
\newblock The inverse problem for the one-dimensional {S}chr\"odinger equation
  with an energy-dependent potential. {II}.
\newblock {\em Ann. Inst. H. Poincar\'e Sect. A (N.S.)}, 25(2):119--137, 1976.

\bibitem{Jon:93}
P.~Jonas.
\newblock On the spectral theory of operators associated with perturbed
  {K}lein--{G}ordon and wave type equations.
\newblock {\em J. Oper. Theory}, 29(2):207--224, 1993.

\bibitem{Kam081}
Y.~Kamimura.
\newblock Energy dependent inverse scattering on the line.
\newblock {\em Differential Integral Equations}, 21(11-12):1083--1112, 2008.

\bibitem{KapPerShuTop:2005}
T.~Kappeler, P.~Perry, M.~Shubin, and P.~Topalov.
\newblock The {M}iura map on the line.
\newblock {\em Int. Math. Res. Not.}, (50):3091--3133, 2005.

\bibitem{Kat:1966}
T.~Kato.
\newblock {\em Perturbation {T}heory for {L}inear {O}perators}.
\newblock Die Grundlehren der mathematischen Wissenschaften, Band 132.
  Springer-Verlag New York, Inc., New York, 1966.

\bibitem{LanNajTre:06}
H.~Langer, B.~Najman, and C.~Tretter.
\newblock Spectral theory of the {K}lein--{G}ordon equation in {P}ontryagin
  spaces.
\newblock {\em Comm. Math. Phys.}, 267:156--180, 2006.

\bibitem{LanNajTre:08}
H.~Langer, B.~Najman, and C.~Tretter.
\newblock Spectral theory of the {K}lein--{G}ordon equation in {K}rein spaces.
\newblock {\em Proc. Edinb. Math. Soc., II. Ser.}, 51(3):711--750, 2008.

\bibitem{LevSar:1991}
B.~M. Levitan and I.~S. Sargsjan.
\newblock {\em Sturm--{L}iouville and {D}irac {O}perators}, volume~59 of {\em
  Mathematics and its Applications (Soviet Series)}.
\newblock Kluwer Academic Publishers Group, Dordrecht, 1991.
\newblock Translated from the Russian edition (Nauka Publ., Moscow, 1988).

\bibitem{MakGus86}
F.~G. Maksudov and G.~S. Guse{\u\i}nov.
\newblock On the solution of the inverse scattering problem for a quadratic
  pencil of one-dimensional {S}chr\"odinger operators on the whole axis.
\newblock {\em Dokl. Akad. Nauk SSSR}, 289(1):42--46, 1986.

\bibitem{MakGus89}
F.~G. Maksudov and G.~S. Guse{\u\i}nov.
\newblock An inverse scattering problem for a quadratic pencil of
  {S}turm--{L}iouville operators on the full line.
\newblock In {\em Spectral theory of operators and its applications, {N}o.\ 9
  ({R}ussian)}, pages 176--211. ``\`Elm'', Baku, 1989.

\bibitem{Mar:88}
A.~S. Markus.
\newblock {\em {Introduction to the Spectral Theory of Polynomial Operator
  Pencils}}, volume~71 of {\em Translations of Mathematical Monographs}.
\newblock American Mathematical Society, Providence, RI, 1988.
\newblock Translated from the Russian edition (``Shtiintsa'', Kishinev, 1986).
  With an appendix by M. V. Keldysh.

\bibitem{Nab06}
A.~A. Nabiev.
\newblock Inverse scattering problem for the {S}chr\"odinger-type equation with
  a polynomial energy-dependent potential.
\newblock {\em Inverse Problems}, 22(6):2055--2068, 2006.

\bibitem{NabGus06}
A.~A. Nabiev and I.~M. Guseinov.
\newblock On the {J}ost solutions of the {S}chr\"odinger-type equations with a
  polynomial energy-dependent potential.
\newblock {\em Inverse Problems}, 22(1):55--67, 2006.

\bibitem{Nab04}
I.~M. Nabiev.
\newblock The inverse spectral problem for the diffusion operator on an
  interval.
\newblock {\em Mat. Fiz. Anal. Geom.}, 11(3):302--313, 2004.

\bibitem{Nab07}
I.~M. Nabiev.
\newblock An inverse quasiperiodic problem for a diffusion operator.
\newblock {\em Dokl. Akad. Nauk}, 415(2):168--170, 2007.

\bibitem{Naj:83}
B.~Najman.
\newblock Eigenvalues of the {K}lein--{G}ordon equation.
\newblock {\em Proc. Edinb. Math. Soc. (2)}, 26:181--190, 1983.

\bibitem{Pro:2012}
N.~Pronska.
\newblock Asymptotics of eigenvalues and eigenfunctions of energy-dependent
  {S}turm--{L}iouville operators.
\newblock {\em In preparation}.

\bibitem{Pro:2011a}
N.~Pronska.
\newblock Spectral properties of {S}turm--{L}iouville operators with singular
  energy-dependent potentials.
\newblock {\em In preparation}.

\bibitem{SatSzm95}
D.~H. Sattinger and J.~Szmigielski.
\newblock Energy dependent scattering theory.
\newblock {\em Differential Integral Equations}, 8(5):945--959, 1995.

\bibitem{SavShk:1999}
A.~M. Savchuk and A.~A. Shkalikov.
\newblock Sturm--{L}iouville operators with singular potentials.
\newblock {\em Mat. Zametki}, 66(6):897--912, 1999.

\bibitem{SavShk:2003}
A.~M. Savchuk and A.~A. Shkalikov.
\newblock Sturm--{L}iouville operators with distribution potentials.
\newblock {\em Tr. Mosk. Mat. Obs.}, 64:159--212, 2003.

\bibitem{SavShk:2005}
A.~M. Savchuk and A.~A. Shkalikov.
\newblock Inverse problem for {S}turm--{L}iouville operators with distribution
  potentials: reconstruction from two spectra.
\newblock {\em Russ. J. Math. Phys.}, 12(4):507--514, 2005.

\bibitem{SavShk:2006}
A.~M. Savchuk and A.~A. Shkalikov.
\newblock On the eigenvalues of the {S}turm--{L}iouville operator with
  potentials in {S}obolev spaces.
\newblock {\em Mat. Zametki}, 80(6):864--884, 2006.

\bibitem{Tsu81}
M.~Tsutsumi.
\newblock On the inverse scattering problem for the one-dimensional
  {S}chr\"odinger equation with an energy dependent potential.
\newblock {\em J. Math. Anal. Appl.}, 83(1):316--350, 1981.

\bibitem{MeePiv01}
C.~van~der Mee and V.~Pivovarchik.
\newblock Inverse scattering for a {S}chr\"odinger equation with energy
  dependent potential.
\newblock {\em J. Math. Phys.}, 42(1):158--181, 2001.

\bibitem{Yam:90}
M.~Yamamoto.
\newblock Inverse eigenvalue problem for a vibration of a string with viscous
  drag.
\newblock {\em J. Math. Anal. Appl.}, 152:20--34, 1990.

\bibitem{YanGuo11}
C.-F. Yang and Y.-X. Guo.
\newblock Determination of a differential pencil from interior spectral data.
\newblock {\em J. Math. Anal. Appl.}, 375(1):284--293, 2011.

\end{thebibliography}

\end{document}